\documentclass{amsart}

\usepackage{graphicx}
\usepackage{amssymb,latexsym}
\usepackage{amsmath,amsfonts,amstext,bbm}
\usepackage[utf8]{inputenc}

\newtheorem{theorem}{Theorem}
\newtheorem{lemma}{Lemma}

\def\XXint#1#2#3{{\setbox0=\hbox{$#1{#2#3}{\int}$ }
\vcenter{\hbox{$#2#3$ }}\kern-.6\wd0}}

\begin{document}
\title[DISCRETE FRACTIONAL INTEGRAL OPERATORS WITH  QUADRATIC FORMS]
      {Discrete fractional integral operators with  binary quadratic forms as phase polynomials}
      
\author{Faruk Temur}
\address{Department of Mathematics\\
        Izmir Institute of Technology\\ Urla \\Izmir \\ 35430\\Turkey  }
\email{faruktemur@iyte.edu.tr}
\author{Ezgİ Sert}
\email{ezgisert@iyte.edu.tr}

\keywords{Discrete  fractional integral operators, Discrete singular Radon transforms, Binary quadratic forms}
\subjclass[2010]{Primary: 44A12,42B20; Secondary:11E16 }
\date{January  11, 2018}

\begin{abstract}
We give estimates on discrete  fractional integral operators along binary quadratic forms.   These operators  have been studied for 30 years starting with the investigations of Arkhipov and Oskolkov, but efforts have concentrated  on cases where the phase polynomial is   translation invariant or quasi-translation invariant. This work   presents    the first results for operators with neither translation invariant nor quasi-translation invariant phase polynomials.   
\end{abstract} 

\maketitle

\section{introduction}
Let $f:\mathbb{Z}^l \rightarrow \mathbb{C}$ be a function and $P:\mathbb{Z}^{k+l} \rightarrow \mathbb{Z}^l$ be a polynomial with integer coefficients. The operator 
\begin{equation}\label{fig}
\mathcal{I}_{\lambda}f(n)=\sum_{m\in \mathbb{Z}^k_*} \frac{f(P(m,n))}{|m|^{\lambda k}}
\end{equation}
where $\mathbb{Z}^k_*=\mathbb{Z}^k-\{0\}$ and $\lambda>0$,  is    called a discrete fractional integral. We call $P(m,n)$ the phase polynomial of $\mathcal{I}_{\lambda}f$. If this polynomial is $n-m$, then these discrete operators have essentially the same boundedness properties as those given by the Hardy-Littlewood-Sobolev theorem for their continuous counterparts. But if $P(m,n)$ involves a higher order term, then the discrete analogues satisfy boundedness results with more extensive ranges.   These operators and the related case of discrete singular Radon transforms
\begin{equation}\label{srg}
	\mathcal{R}f(n)=\sum_{m\in \mathbb{Z}^k_*} {f(P(m,n))}{K(m)}
\end{equation}
where $K$ is a Calderon-Zygmund kernel, 
have been studied extensively since \cite{ao}, and  efforts have  concentrated on the translation invariant case for which $P(m,n)=n-Q(m)$, and  the quasi-translation invariant case  for which the operators have the form
\begin{equation}\label{fiq}
\mathcal{J}_{\lambda}f(n,n')=\sum_{m\in \mathbb{Z}^k_*} \frac{f(n-m,n'-Q(m,n))}{|m|^{\lambda k}},
\end{equation}
 or in the case of Radon transforms
 \begin{equation}\label{srq}
 \mathcal{R}^*f(n,n')=\sum_{m\in \mathbb{Z}^k_*} {f(n-m,n'-Q(m,n))}{K(m)},
 \end{equation}
  with $f:\mathbb{Z}^{k+l} \rightarrow \mathbb{C}$, $Q:\mathbb{Z}^{k+k} \rightarrow \mathbb{Z}^l$.
 Translation invariance and  quasi-translation invariance make  the operators  amenable to Fourier analytic techniques, and they are studied as  multipliers, as exemplified by \cite{ao}. Over the last thirty years utilizing such tools as  multipliers, maximal functions, singular integrals, Hardy-Littlewood circle method a very extensive theory that is also connected to  ergodic theory  have been developed for these two cases. In this work we  treat cases that remain completely outside these efforts, and instead of  Fourier transform based methods we  utilize results from number theory on  representation of an integer by a polynomial, e.g. as a sum of two squares. We  use the number, structure and distribution of such representations in conjunction with appropriate decompositions to prove our results. 
 
 We would like to recall certain important milestones of   the study of discrete fractional integrals, a much more extensive account of these developments, as well as a general history of the study of discrete analogues in harmonic analysis can be found in \cite{lbp}. Observe that if in $\eqref{srg}$ we let $m\in \mathbb{Z}_*, K(m)=1/m, P(m,n)=n-Q(m) $, and regard  $n$ as a continuous variable in $\mathbb{R}$,      taking the Fourier transform in an appropriate sense would yield the multiplier
 \[m(\xi)=\sum_{m\in \mathbb{Z}_*} \frac{e^{-2\pi i \xi Q(m)}}{m}.\]
 The first result in the subject,  obtained by Arkhipov and Oskolkov in \cite{ao},  was that this $m(\xi)$ defines a  bounded function, which   implies in a straightforward way that the corresponding discrete singular Radon transform is a bounded operator on $l^2(\mathbb{Z})$.
 Then  Bourgain \cite{jb1,jb2} used the Hardy-Littlewood circle method to obtain results on the closely related maximal function
 \[\mathcal{M}f(n)=\sup_{r\geq 0}\frac{1}{r+1}\sum_{m=0}^{r}|f(n-Q(m))|.\]
The results and methods of   \cite{jb1,jb2}  inspired almost all of the later work done on the subject. For  discrete singular Radon transforms with translation invariant phase polynomials the work of Ionescu and Wainger \cite{iw}  greatly extended the known boundedness results by proving that if $m\in \mathbb{Z}^k,n\in \mathbb{Z}^l$  and $P(m,n)=n-Q(m)$, then $\eqref{srg}$ is a bounded operator on $l^p(\mathbb{Z}^l)$ for all $1<p<\infty$.  For quasi-translation invariant operators of the form $\eqref{srq}$, if $m,n\in \mathbb{Z}^k, n'\in \mathbb{Z}^l$,   and $Q:\mathbb{Z}^{k+k} \rightarrow \mathbb{Z}^l$ then  \cite{sw1}  proves that   $\mathcal{R}^*$ is a bounded operator on $l^2(\mathbb{Z}^{k+l})$. Also by \cite{imsw}, if $Q$ has degree at most 2 the operator  $\mathcal{R}^*$ is bounded  on $l^p(\mathbb{Z}^{k+l})$ for $1<p<\infty$.  As for the discrete fractional integrals   
 when $m,n\in \mathbb{Z}$ and $P(m,n)=n-m^2$, works of Stein and Wainger \cite{sw,sw2},  Oberlin \cite{ob}, Ionescu and Wainger \cite{iw} led to a complete understanding of the operator $\mathcal{I}_{\lambda}$,  and it is now known that this operator is bounded  from $l^p(\mathbb{Z})$ to $l^q(\mathbb{Z})$  if and only if 
 \[\frac{1}{q}\leq \frac{1}{p}-\frac{1-\lambda}{2},\ \ \ \ \ \  \ \ \ \ \frac{1}{q}<\lambda, \frac{1}{p}>1-\lambda.  \]
Unfortunately even for the cases $P(m,n)=n-m^s$ with $s>2$ sharp results remain out of reach. 
For the quasi-translation invariant operators $\mathcal{J}_{\lambda}$  too   there exist  almost sharp results for certain special cases, see for example \cite{lbp3}, while complete solution remains out of reach.  
   Various extensions and generalizations of the results exhibited in this short summary, as well as their connections and applications to ergodic theory  have been uncovered   in such works as  \cite{akm,mst1,mst2,lbp,lbp1,lbp2,lbp3}.

In this work we  prove results on operators of type \eqref{fig}, with $f\in l^p(\mathbb{Z})$ and $P(m,n)=q(m,n)=am^2+bmn+cn^2$, where both variables $m,n$ and coefficients     $a,b,c$ are integers.  Polynomials of this type are called integral binary quadratic forms, and they were studied intensely in 19th century by Gauss, Jacobi, Dirichlet and others.  Properties of $q$ depend greatly on  the discriminant   $\Delta(q):=b^2-4ac$. The form is called definite if $\Delta(q)<0$, and indefinite if $\Delta(q)>0$. It is called diagonal if $b=0$. We  state our first theorem.

\begin{theorem}
Let $f\in l^p(\mathbb{Z})$  where $1\leq p\leq \infty$. Let $q$ be a definite integral binary quadratic form with discriminant $\Delta(q)$. Then the  operator 
\[\mathcal{I}_{\lambda}f(n)=\sum_{m\in \mathbb{Z}_*} \frac{f(q(m,n))}{|m|^{\lambda }}\]
satisfies
\begin{equation}\label{t1}
\|\mathcal{I}_{\lambda}f\|_p \leq C_{p,\lambda,\Delta(q)}\|f\|_p.
\end{equation}
for $\lambda >1-p^{-1}$ when $p<\infty$,  and for $\lambda>1$ when $p=\infty$. This result is sharp in the following sense.

 For $p=1$ and $r\in \mathbb{N}$,  there is a  form $q$ and a function $f$ such that  $\|\mathcal{I}_{\log^r}f\|_1=\infty$, where 
\[\mathcal{I}_{\log^r}f(n)=\sum_{m\in \mathbb{Z}_*} \frac{f(q(m,n))}{\log^r (1+|m|)}.\]

For $1<p<\infty$, there is a form $q$ and a function $f$ such that $\|\mathcal{I}_{\lambda}f\|_p=\infty$ where $\lambda =1-p^{-1}.$

For $p=\infty$, there is a form $q$ and a function $f$ such that $\|\mathcal{I}_{\lambda}f\|_{\infty}=\infty$ where $\lambda =1.$
\end{theorem}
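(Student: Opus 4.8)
\emph{Plan.} I would split the positive estimate into the regimes $p=\infty$, $p=1$, and $1<p<\infty$, reduce the first two to elementary and number‑theoretic facts, and obtain the third by analytic interpolation; then I would construct the three sharpness examples separately. Normalizing first, one may assume $q$ is positive definite (if $q$ is negative definite, replace $f(k)$ by $f(-k)$, which changes neither $\|f\|_p$ nor $\|\mathcal I_\lambda f\|_p$), so that $q(m,n)\asymp_q m^2+n^2$ and every representation $q(m,n)=k$ forces $|m|\le C_q\sqrt k$. Two facts about $r_q(k):=\#\{(m,n)\in\mathbb Z^2:q(m,n)=k\}$ will do the work. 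The first is the standard divisor bound $(A)$: $r_q(k)\lesssim_{q,\varepsilon}k^{\varepsilon}$, coming from Dirichlet's formula $w\sum_{d\mid k}\chi(d)$ for the number of representations of $k$ by the forms of a fixed discriminant (so $r_q(k)\le w\,d(k)$, with $d$ the divisor function; an imprimitive $q=eq_0$ reduces to $q_0$). The second, which is the new input, is the distribution bound $(B)$:
\[
\#\{(m,n)\in\mathbb Z^2:q(m,n)=k,\ |m|\le t\}\ \lesssim_q\ \tfrac{t^2}{\sqrt k}+1\qquad(t\ge 1).
\]
I would prove $(B)$ by an elementary spacing argument: writing $q(m,n)=am^2+bmn+cn^2$ with $a,c>0$ and solving for $n$, an integer $m$ is admissible only if $\Delta m^2+4ck=s_m^2$ for some $s_m\ge 0$; for $0\le m_1<m_2\le t$ admissible with $t\le\sqrt{2ck/|\Delta|}$ one has $s_{m_1}>s_{m_2}\ge0$, hence $s_{m_1}-s_{m_2}\ge1$, while $s_{m_1}+s_{m_2}\ge s_{m_1}=\sqrt{4ck-|\Delta|m_1^2}\ge\sqrt{2ck}$, so from $(s_{m_1}-s_{m_2})(s_{m_1}+s_{m_2})=|\Delta|(m_2^2-m_1^2)$ we get $m_2^2-m_1^2\gtrsim_q\sqrt k$; thus the squares of admissible $m\in[0,t]$ are $\gtrsim_q\sqrt k$‑separated, there are $\lesssim_q t^2/\sqrt k+1$ of them, and each produces at most two values of $n$. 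For $t\ge\sqrt{2ck/|\Delta|}$ one instead bounds by $r_q(k)\le 2w\,d(k)\lesssim_q\sqrt k\lesssim_q t^2/\sqrt k$.

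\emph{The positive bound.} For $p=\infty$ and $\lambda>1$ the estimate is immediate: $|\mathcal I_\lambda f(n)|\le\|f\|_\infty\sum_{m\ne0}|m|^{-\lambda}=2\zeta(\lambda)\|f\|_\infty$. For $p=1$ and $\lambda>0$, nonnegativity of the kernel gives $\|\mathcal I_\lambda f\|_1\le\sum_k|f(k)|\,N_\lambda(k)$ with $N_\lambda(k)=\sum_{(m,n):q(m,n)=k}|m|^{-\lambda}$, so I must show $\sup_kN_\lambda(k)<\infty$. I would decompose $N_\lambda(k)$ dyadically in $|m|\sim2^j$ (only $2^j\lesssim_q\sqrt k$ occur), bound $\#\{(m,n):q=k,|m|\sim2^j\}$ by $\min\bigl(r_q(k),\,C_q(2^{2j}/\sqrt k+1)\bigr)$ using $(A)$ and $(B)$, split the sum at $2^j\asymp r_q(k)^{1/2}k^{1/4}$, and estimate each piece by $\lesssim_{q,\lambda}1+r_q(k)^{O_\lambda(1)}k^{-\lambda/4}$, which by $(A)$ is $O_{q,\lambda}(1)$ once $k$ is large (and trivially bounded for small $k$). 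For $1<p<\infty$ and $\lambda>1-p^{-1}$ I would interpolate: on the analytic family $\mathcal I_zf(n)=e^{z^2}\sum_{m\ne0}|m|^{-z}f(q(m,n))$ (a finite sum for finitely supported $f$), the two cases above give boundedness on $\ell^1$ uniformly over $\{\mathrm{Re}\,z=\lambda_1\}$ for any $\lambda_1>0$ and on $\ell^\infty$ uniformly over $\{\mathrm{Re}\,z=\lambda_\infty\}$ for any $\lambda_\infty>1$ (the $e^{z^2}$ gives Gaussian decay in $\mathrm{Im}\,z$). Choosing $\lambda_\infty>\max(1,\lambda)$ close to $\max(1,\lambda)$ and $\lambda_1:=p\lambda-(p-1)\lambda_\infty$, one has $\lambda_1>0$ exactly because $\lambda>1-p^{-1}$, and $\tfrac1p\lambda_1+\tfrac1{p'}\lambda_\infty=\lambda$; Stein interpolation with $\theta=1-p^{-1}$ then gives boundedness of $\mathcal I_z$ at $z=\lambda$, i.e.\ of $\mathcal I_\lambda$, on $\ell^p$.

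\emph{Sharpness.} In all three examples I would take $q(m,n)=m^2+n^2$. For $p=1$: $r_q$ is unbounded and grows faster than any power of $\log k$ — if $k$ is the product of the first $L$ primes $\equiv1\ (\mathrm{mod}\ 4)$ then $r_q(k)=4\cdot2^L=\exp\bigl((\log2+o(1))\log k/\log\log k\bigr)$ — and since every representation has $|m|\le\sqrt k$ we get $\sum_{(m,n):q(m,n)=k}\log^{-r}(1+|m|)\gtrsim r_q(k)/\log^rk\to\infty$ along such $k$; picking distinct $k_i$ for which this sum is $\ge4^i$ and setting $f=\sum_i2^{-i}\delta_{k_i}\in\ell^1$ makes $\|\mathcal I_{\log^r}f\|_1=\sum_i2^{-i}\|\mathcal I_{\log^r}\delta_{k_i}\|_1=\infty$. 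For $p=\infty$, $\lambda=1$: take $f=\mathbf 1_S$, $S=\bigcup_i\{m^2+n_i^2:1\le|m|\le M_i\}$, with $n_i,M_i$ increasing rapidly enough that the blocks lie in disjoint intervals and $\mathcal I_1\mathbf1_S$ still converges pointwise; then $\mathcal I_1\mathbf1_S(n_i)\ge\sum_{1\le|m|\le M_i}|m|^{-1}\gtrsim\log M_i\to\infty$. For $1<p<\infty$, $\lambda=1-p^{-1}$: let $S_i$ be the set of sums of two squares in $[M_i^2,2M_i^2]$ with $M_i=2^i$, so the $S_i$ are disjoint; Landau's theorem gives $|S_i|\asymp M_i^2/\sqrt{\log M_i}$, while for the $\asymp M_i$ integers $n$ with $|n|\in[M_i,1.1M_i]$ every $m$ with $1\le|m|\le0.8M_i$ has $m^2+n^2\in S_i$, so $\mathcal I_\lambda\mathbf1_{S_i}(n)\ge\sum_{1\le|m|\le0.8M_i}|m|^{-\lambda}\asymp M_i^{1/p}$ there, whence $\|\mathcal I_\lambda\mathbf1_{S_i}\|_p^p\gtrsim M_i^2$ versus $\|\mathbf1_{S_i}\|_p^p=|S_i|\asymp M_i^2/\sqrt{\log M_i}$. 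Taking $a_i>0$ with $\sum_ia_i^pM_i^2/\sqrt{\log M_i}<\infty$ but $\sum_ia_i^pM_i^2=\infty$ (e.g.\ $a_i^pM_i^2=1/i$) and $f=\sum_ia_i\mathbf1_{S_i}$ gives $f\in\ell^p$ with $\|\mathcal I_\lambda f\|_p=\infty$.

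\emph{Main difficulty.} The crux is the uniform bound $\sup_kN_\lambda(k)<\infty$ for $p=1$; it succeeds only because the spacing estimate $(B)$ — admissible $m$ in a short interval have widely separated squares — combines with the divisor bound $(A)$ to stop the dyadic contributions $2^{-j\lambda}\#\{q(m,n)=k,|m|\sim2^j\}$ from adding up to something unbounded. The $p=\infty$ case and the interpolation are routine. Of the sharpness examples, only the range $1<p<\infty$ is delicate: there one needs the exact density $x/\sqrt{\log x}$ of the sums of two squares (Landau) to pry $f\in\ell^p$ apart from $\mathcal I_\lambda f\notin\ell^p$.
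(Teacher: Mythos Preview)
Your argument is correct and shares the paper's skeleton for $p=1$—bound $N_\lambda(k)$ by combining a ``few small-$|m|$ representations'' lemma with the divisor bound on $r_q(k)$—but differs from the paper in three places. First, your spacing bound $(B)$ is proved arithmetically (integrality of $s_m=|2cn+bm|$ forces the squares of admissible $m$ to be $\gtrsim_q\sqrt k$-separated) and holds for every $t$; the paper proves only the single threshold $t=k^{1/4}/\sqrt{|\Delta|}$ (at most four solutions there, their Lemma~1) by a geometric argument—on that $x$-range each branch of the ellipse lies within $O_q(1)$ of its tangent line at $x=0$, so it can meet at most one line of the pencil $\{2cy+bx=j:j\in\mathbb Z\}$ that carries all lattice points—and then splits $N_\lambda(k)$ into just two pieces rather than dyadically. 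Second, for $1<p<\infty$ the paper avoids Stein interpolation entirely: writing $\lambda'=\lambda-1+p^{-1}>0$ and applying H\"older with the factorization $|m|^{-\lambda}=|m|^{-\lambda'/2}\cdot|m|^{-(1-p^{-1}+\lambda'/2)}$ reduces directly to the $p=1$ estimate with exponent $\lambda'p/2$. Third, and most strikingly, the paper's sharpness example for $1<p<\infty$ is one line and needs no Landau theorem: with $q(m,n)=m^2+n^2$ and $f(j^2)=j^{-1/p}(\log j)^{-(1+p)/(2p)}$ (zero elsewhere) one has $f\in\ell^p$, while already at $n=0$ the defining series diverges, $\mathcal I_{1-1/p}f(0)=2\sum_{m\ge2}m^{-1}(\log m)^{-(1+p)/(2p)}=\infty$. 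Your route buys a sharper distribution estimate $(B)$ and a cleanly modular interpolation; the paper's buys brevity, particularly in the $1<p<\infty$ bound and counterexample.
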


We would like to have a similar result for forms with nonnegative discriminant as well, and as will be stated in our second theorem this is possible when the discriminant is not a square number. Although the exponents in both theorems are exactly the same, the proofs are rather different, with the proof of the second theorem being much more delicate. Also we would like to emphasize that it is not possible to improve upon either result. We therefore state them separately. 

\begin{theorem}
Let $f\in l^p(\mathbb{Z})$  where $1\leq p\leq \infty$. Let $q$ be an indefinite   integral binary quadratic form with  nonsquare  discriminant $\Delta(q)$. Then the  operator 
\[\mathcal{I}_{\lambda}f(n)=\sum_{m\in \mathbb{Z}_*} \frac{f(q(m,n))}{|m|^{\lambda }}\]
satisfies
\begin{equation}\label{t2}
\|\mathcal{I}_{\lambda}f\|_p \leq C_{p,\lambda,\Delta(q)}\|f\|_p.
\end{equation}
for $\lambda >1-p^{-1}$ when $p<\infty$,  and for $\lambda>1$ when $p=\infty$. This result is sharp in the following sense.

 For $p=1$ and $r\in \mathbb{N}$,  there is a  form $q$ and a function $f$ such that $\|\mathcal{I}_{\log^r}f\|_1=\infty$, where 
\[\mathcal{I}_{\log^r}f(n)=\sum_{m\in \mathbb{Z}_*} \frac{f(q(m,n))}{\log^r (1+|m|)}.\]

For $1<p<\infty$, there is a form $q$ and a function $f$ such that $\|\mathcal{I}_{\lambda}f\|_p=\infty$ where $\lambda =1-p^{-1}.$

For $p=\infty$, there is a form $q$ and a function $f$ such that $\|\mathcal{I}_{\lambda}f\|_{\infty}=\infty$ where $\lambda =1.$
\end{theorem}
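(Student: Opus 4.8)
The plan is to obtain the upper bound by a dyadic decomposition in $m$ combined with interpolation, reducing everything to a single counting estimate for the equation $q(m,n)=k$, and to establish sharpness by three separate explicit examples. For the upper bound, since the kernel is nonnegative we may assume $f\ge0$; writing $T_jf(n)=\sum_{2^j\le|m|<2^{j+1}}f(q(m,n))$ we then have $\mathcal I_\lambda f(n)\le\sum_{j\ge0}2^{-\lambda j}T_jf(n)$. Trivially $\|T_j\|_{l^\infty\to l^\infty}\le2^{j+1}$, and — the kernel being nonnegative — $\|T_j\|_{l^1\to l^1}=R_j$, where
\[
R_j:=\sup_{k\in\mathbb Z}\#\bigl\{(m,n)\in\mathbb Z^2:\ q(m,n)=k,\ 2^j\le|m|<2^{j+1}\bigr\}.
\]
By interpolation $\|T_j\|_{l^p\to l^p}\le(2R_j)^{1/p}2^{(j+1)(1-1/p)}$, whence $\|\mathcal I_\lambda f\|_p\le\bigl(\sum_j2^{-\lambda j}\|T_j\|_{l^p\to l^p}\bigr)\|f\|_p$; so \eqref{t2} follows for $\lambda>1-p^{-1}$ (resp. $\lambda>1$ when $p=\infty$) as soon as one proves $R_j\le C_{\Delta,\epsilon}2^{\epsilon j}$ for every $\epsilon>0$.

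\textbf{The counting estimate.} Since $\Delta(q)=b^2-4ac$ is not a square, $q$ has no rational linear factor; hence $c\ne0$ (a form with $c=0$ has square discriminant) and $q(m,n)=0$ has only the trivial solution. Using $4c\,q(m,n)=(2cn+bm)^2-\Delta m^2$ and the injectivity of $(m,n)\mapsto(2cn+bm,m)$, I bound $R_j$ by the supremum over $N\in4c\mathbb Z$ of the number of $(u,m)$ with $u^2-\Delta m^2=N$ and $2^j\le|m|<2^{j+1}$, and argue by regimes. If $|N|\le C_02^{2j}$: the solutions of $u^2-\Delta m^2=N$ split into $O_{\Delta,\epsilon}(|N|^\epsilon)$ orbits under the group of proper automorphs of the form, which — as $\Delta>0$ is not a square — is infinite cyclic (times $\pm I$) and generated by a hyperbolic matrix with a fixed expansion factor $\varepsilon_\Delta>1$, so each orbit meets the block $2^j\le|m|<2^{j+1}$ in at most $C_\Delta$ points; this gives $R_j\lesssim_{\Delta,\epsilon}|N|^\epsilon\lesssim2^{2\epsilon j}$. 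If $N<-C_02^{2j}$: then $\Delta m^2=u^2+|N|\ge|N|>\Delta2^{2j+2}$ forces $|m|>2^{j+1}$, so there is nothing to count. If $N>C_02^{2j}$: then for $2^j\le|m|<2^{j+1}$ we have $u^2=N+\Delta m^2\in[N,N+C_\Delta2^{2j}]$, so $u$ lies in an interval of length $\lesssim_\Delta2^{2j}/\sqrt N$ and each $u$ determines $|m|$; hence $R_j\lesssim_\Delta1+2^{2j}/\sqrt{|N|}$, which together with the orbit bound $R_j\lesssim_{\Delta,\epsilon}|N|^\epsilon$ yields $R_j\lesssim_{\Delta,\epsilon}2^{\epsilon j}$. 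The first regime has no analogue for definite forms — where $2^j\le|m|<2^{j+1}$ already forces $|k|\gtrsim2^{2j}$ and $q(m,n)=k$ has only finitely many solutions — and organizing the (here possibly infinitely many) solutions into few hyperbolic orbits, keeping the orbit count polynomially small, is the main obstacle and the reason Theorem 2 is more delicate than Theorem 1.

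\textbf{Sharpness.} For $p=\infty$ and $\lambda=1$, take $f\equiv1$: then $\mathcal I_1f(n)=2\sum_{m\ge1}m^{-1}=\infty$ for every $n$ and every $q$. For $1<p<\infty$ and $\lambda=1-p^{-1}$, fix $n_0=1$ and observe that $\phi(m):=q(m,1)$ is strictly monotone, hence injective, for $m\ge m_1$ with $m_1$ large (here $a\ne0$, again because $\Delta(q)$ is not a square); define $f(\phi(m))=m^{-1/p}(\log m)^{-1}$ for $m\ge m_1$ and $f=0$ elsewhere. Then $\|f\|_p^p=\sum_{m\ge m_1}m^{-1}(\log m)^{-p}<\infty$ while $\mathcal I_\lambda f(1)\ge\sum_{m\ge m_1}m^{-(1-1/p)}f(\phi(m))=\sum_{m\ge m_1}m^{-1}(\log m)^{-1}=\infty$, so $\|\mathcal I_\lambda f\|_p=\infty$; again any $q$ works.

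\textbf{Sharpness at $p=1$.} Here indefiniteness is used. Take $q(m,n)=m^2-2n^2$ and $k_i=\prod_{\ell=1}^i p_\ell$, with $p_\ell$ the $\ell$-th prime represented by $q$ (infinitely many, by Dirichlet). As $q$ is the principal form of discriminant $8$, the equation $x^2-2y^2=k_i$ has $\asymp2^i$ proper equivalence classes of solutions, each an infinite hyperbolic orbit in which $|m|\ge\sqrt{k_i}$ and which, from a minimum of size $\asymp\sqrt{k_i}$, grows geometrically; hence $\sum_{(m,n):\,q(m,n)=k_i}\log^{-r}(1+|m|)\gtrsim2^i(\log k_i)^{1-r}\to\infty$ (for $r=1$, even a single class already gives a divergent sum). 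Choosing $c_i>0$ with $\sum_ic_i<\infty$ but $\sum_ic_i\,2^i(\log k_i)^{1-r}=\infty$ and setting $f=\sum_ic_i\mathbf 1_{\{k_i\}}\in l^1(\mathbb Z)$, we obtain $\|\mathcal I_{\log^r}f\|_1=\sum_ic_i\sum_{(m,n):\,q(m,n)=k_i}\log^{-r}(1+|m|)=\infty$.
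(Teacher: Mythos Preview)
Your argument is correct and takes a genuinely different route from the paper. The paper bounds $\sum_{(m,n)\in A_k}|m|^{-\lambda}$ directly and uniformly in $k$: it first treats the primitive diagonal case $am^2+cn^2$, shows via the automorph chain that within each orbit the first coordinate grows by a fixed factor at least $\sqrt3$ per step (so the sum of $|m|^{-\lambda}$ over one orbit is geometric), uses Lemma~2 for the at most four representations with $|m|\le|k|^{1/4}/\sqrt{\Delta}$, and then reduces the general $q$ to this diagonal case by the same substitution $(m,n)\mapsto(m,bm+2cn)$ you use; the range $1<p<\infty$ is obtained by a H\"older reduction back to the $\ell^1$-type sum. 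Your dyadic-in-$m$ decomposition plus interpolation is a cleaner packaging that isolates the single arithmetic input $R_j\le C_{\Delta,\epsilon}2^{\epsilon j}$ and handles all $p$ simultaneously. Both proofs rest on the same two number-theoretic facts --- the solutions of $u^2-\Delta m^2=N$ fall into $O_{\Delta,\epsilon}(|N|^\epsilon)$ automorph orbits, and the generating automorph is hyperbolic --- but you use them to count lattice points in a dyadic $m$-annulus while the paper sums $|m|^{-\lambda}$ over each orbit directly. Two points in your write-up deserve an extra line: the claim that each orbit meets $\{2^j\le|m|<2^{j+1}\}$ in $O_\Delta(1)$ points follows once one writes the $m$-coordinate along the orbit as $\alpha\varepsilon_\Delta^{\,s}+\beta\varepsilon_\Delta^{-s}$ and checks that both the exponential tails and (when $\alpha\beta<0$) the linear regime near its zero contribute $O_\Delta(1)$ to any dyadic block; and in Regime~3 combining your two bounds to obtain $R_j\le C_{\Delta,\epsilon}2^{\epsilon j}$ needs an explicit split at, say, $N=2^{4j}$. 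Your sharpness constructions are correct and close in spirit to the paper's; at $p=1$ the paper instead takes $k_j=(7\cdot17)^{2j+1}$ and for each congruence class locates one representation with $|m|\le10k_j$ by iterating the automorph, whereas your squarefree $k_i$ together with the assertion that the minimal $|m|$ in each orbit is comparable to $\sqrt{k_i}$ (true, since a fundamental domain for the hyperbolic angle has bounded length, giving $\sqrt{k_i}\le|m|_{\min}\le 3\sqrt{k_i}$) works equally well.
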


It will be clear in the next section that in both theorems the cases  $\lambda >1$ are trivial, and therefore by nontrivial estimates we mean   estimates with $\lambda\leq 1$. 
We will show in the next section that boundedness results with a similar range of $\lambda$ is never possible when the discriminant is a square number. But if it is a nonzero square number, with  some  additional assumptions on coefficients,  a set of weaker results may still be possible. We hope the methods we introduce here will allow progress on the more general  case of an arbitrary second degree integral polynomial  of two variables as the phase polynomial.  Even more interesting and challenging would be proving results for  forms of higher rank and degree, and then extending these results to arbitrary polynomials.  
   For  quadratic forms of higher rank proving such results with the methods of this work should be quite possible, for a great deal is known about  representations of integers by such forms. The case of  higher degree forms   seems to be very challenging, for  the theory of such forms is much less developed than that of quadratic forms. But it may still be possible to uncover some partial results. Connecting the methods and results of this work to the theory of maximal functions, and to ergodic theory would also be a very interesting task. The authors will pursue these questions further in their upcoming works.

  The article is organized as follows.  In the next section we establish the notation and terminology that will be used for the rest of this work, and we demonstrate that it is never possible to obtain results such as those given in our two theorems when the discriminant is a square number. Most importantly, by conducting a preliminary investigation of the problem we illustrate the approach that leads to the proofs of our theorems.  This approach uses the number, structure and distribution of representations of integers by quadratic forms. To understand these we study quadratic forms in section 3 from an analytic and geometric point of view. We  solve them over the field of real numbers, and  investigate the curves obtained from solutions. This allows us to prove two lemmas regarding  the distribution of representations of integers that will be of crucial importance in the proofs of our theorems. Then in section 4 we study quadratic forms from a number theoretic point of view using ideas and results obtained by Gauss, Jacobi, Dirichlet, and Pall. 
 We summarize certain facts discovered by them  regarding  the number and structure of representations of integers that will constitute the backbone  of our proofs. Finally in sections 5,6 we prove Theorem 1 and  Theorem 2 respectively. Proofs will rely heavily on sections 3,4.


\section{Preliminaries}

Henceforth we concentrate exclusively on integral binary quadratic forms, and reserve the notation $q(m,n)= am^2+bmn+cn^2$ to such forms with integer inputs $m,n$. We will use the letter $k$ to denote the values taken by $q$ on integer inputs $m,n$, that is $q(m,n)=k$. When this equation holds $(m,n)$ is called a representation of $k$ by $q$. We define the set of all representations of $k$ by $q$
\begin{equation}\label{rk}
R_{q,k}:=\{(m,n)\in \mathbb{Z}^2:q(m,n)=k\}.
\end{equation}
When the form $q$ is clear from the context we will drop it and write $R_k$. When we want to consider the same form $q$ on real numbers, we will use the notation $q(x,y), \ x,y\in \mathbb{R}.$ We similarly define for a real number $w$ the set 
\begin{equation}\label{sw}
S_{q,w}:=\{(x,y)\in \mathbb{R}^2:q(x,y)=w\},
\end{equation}
and similarly we will also denote this set by $S_w$. We will use the notation $\#E$ to denote the cardinality of a set $E$, and $|E|$ to denote its Lebesgue measure. Cardinality of $R_{k}$ and of certain intersections of $S_{w}$ with lines will be a chief concern of ours in the rest of this work.

  We reserve the notation $\Delta(q)$ to denote the discriminant of $q$, and when the form $q$ is clear from the context, we will only use $\Delta$. If $\Delta =b^2-4ac<0$, then clearly  $ac>0$, and thus for definite forms both $a,c$ are nonzero, and of the same sign.
 We observe that  $ ax^2+bxy+cy^2=w$ implies
\begin{equation}\label{pdf12}
 \begin{aligned} 
   4a^2x^2+4abxy+4acy^2=4aw,\ \ \ \ \text{and}
  \ \ \ \ 4acx^2+4bcxy+4c^2y^2=4cw,
 \end{aligned}
  \end{equation}
and completion of squares  yields
\begin{equation}\label{pdf19}
(2ax+by)^2-\Delta y^2=4aw , \ \ \ \ \text{and}
  \ \ \ \ (2cy+bx)^2-\Delta x^2=4cw.
\end{equation}
Therefore definite forms can take, even on the field of real numbers, only nonnegative or only nonpositive values depending on the sign of $a$. If $a>0$, the form is called positive definite, and if $a<0$, it is called negative definite. We observe that if $q$ is a positive definite form, then $-q$ is a negative definite one, and vice versa. Therefore it suffices to prove Theorem 1 only for positive definite forms, and henceforth we  consider only these. We note that when $\Delta>0$ the form $q$  takes values of both signs.

For two integers $u,v$  the notation $u|v$  means  $u$ divides $v$. 
A quadratic form $q(m,n)=am^2+bmn+cn^2$ is called primitive if the greatest common divisor   of $a,b,c$, denoted by $\gcd(a,b,c)$, is 1.
A representation $(m,n)$ of $k$ is called proper if $\gcd(m,n)=1$, and improper if it is not proper. In number theory it is mostly the case that studying  under the assumption of coprimality removes many obstacles. We will see that this indeed is the case for the study of representations of integers by quadratic forms, and therefore before understanding the set of all representions it is more fruitful to study the set of proper representations given by 
\begin{equation}\label{rk}
R'_{q,k}:=\{(m,n)\in \mathbb{Z}^2:q(m,n)=k,\  \gcd(m,n)=1\}.
\end{equation}

We now   show that an analogue of Theorem 1 and Theorem 2 for quadratic forms with square discriminant is never possible. Let $q(m,n):=am^2+bmn+cn^2$ be a form with  $\Delta(q)$   perfect square, that is  $\Delta=d^2, \ d\in \mathbb{N}\cup \{0\}$. We observe that if $c\neq 0$, then     the points $(2cj,(-b+ d)j), \ j \in\mathbb{Z}$ are distinct elements of the integer lattice $\mathbb{Z}^2$, and when plugged in $q$ they give $0$.  Thus, for 
\[f(k)=\begin{cases}
1  & \text{if}\ \  k=0\\
0 & \text{otherwise}
\end{cases}\] 
we have
\begin{equation*}
\begin{aligned}
\|\mathcal{I}_{\lambda}f\|_{1}= \sum_{n\in \mathbb{Z} }\left|  \sum_{m\in \mathbb{Z}_*} \frac{f(q(m,n))}{|m|^{\lambda }}     \right| &= \sum_{n\in \mathbb{Z} } \sum_{m\in \mathbb{Z}_*} \frac{f(q(m,n))}{|m|^{\lambda }}\\ &\geq \sum_{j\in \mathbb{Z}_*}\frac{f(q(2cj,(-b+ d)j))}{|2cj|^{\lambda}}\\ & = \sum_{j\in \mathbb{Z}_*}\frac{1}{|2cj|^{\lambda}},     
\end{aligned}
\end{equation*}
and this diverges for $\lambda \leq 1$. If $c=0$, then $q(m,n)=am^2+bmn$, and $(bj,-aj), \ j\in\mathbb{Z}$ are elements of the integer lattice $\mathbb{Z}^2$, and when plugged in $q$ they give 0. Assuming $b\neq 0$,   with the same $f$ as above we have
 \begin{equation*}
 \begin{aligned}
 \|\mathcal{I}_{\lambda}f\|_{1}= \sum_{n\in \mathbb{Z} } \sum_{m\in \mathbb{Z}_*} \frac{f(q(m,n))}{|m|^{\lambda }} &\geq \sum_{j\in \mathbb{Z}_*}\frac{f(q(bj,-aj))}{|2bj|^{\lambda}}\\ & \geq \sum_{j\in \mathbb{Z}_*}\frac{1}{|2bj|^{\lambda}},     
 \end{aligned}
 \end{equation*}
and this diverges for $\lambda \leq 1$. If $b,c$ are both zero, then  $q(m,n)=am^2$. In this case we take  
\[f(k)=\begin{cases}
1  & \text{if}\ \  k=a\\
0 & \text{otherwise,}
\end{cases}\] 
and consider the points $(1,j), \ j\in \mathbb{Z}$. We have
 \begin{equation*}
 \begin{aligned}
 \|\mathcal{I}_{\lambda}f\|_{1}= \sum_{n\in \mathbb{Z} } \sum_{m\in \mathbb{Z}_*} \frac{f(q(m,n))}{|m|^{\lambda }} \geq \sum_{j\in \mathbb{Z}}\frac{f(q(1,j))}{1^{\lambda}} =\infty.
 \end{aligned}
 \end{equation*}
Therefore,  for no quadratic form of square discriminant is a boundedness result on $l^1(\mathbb{Z})$ possible   when   $\lambda \leq 1$.
 The same constructions impose the condition $\lambda>p^{-1}$ for  boundedness results on $l^p(\mathbb{Z}),\ 1<p<\infty$. A counterexample similar to those we provide in sections 5,6 would further impose the condition $\lambda > 1-p^{-1}$,  and some restrictions on coefficients. Further yet,  it is possible to   demonstrate  that the discriminant must be nonzero.
 When all these conditions are met,   we may have a  set of nontrivial  estimates on these spaces.  
As stated in the introduction this issue will be investigated later.

In what follows we  conduct a preliminary investigation of the basic case of  estimates on $l^1(\mathbb{Z})$. As will later be seen in the proofs of both Theorem 1 and Theorem 2 this is the most important case. This investigation  describes the starting point of our arguments, and we will return to it in the proofs of both our theorems. We 
let $f\in l^1(\mathbb{Z}),$ and let
\begin{equation*}
\mathcal{I}_{\lambda}f(n)=\sum_{m\in \mathbb{Z}_*} \frac{f(q(m,n))}{|m|^{\lambda }}
\end{equation*}
be our operator with an arbitrary  quadratic form $q(m,n)=am^2+bmn+cn^2.$ We emphasize that we put no restrictions regarding $\Delta(q).$ We have
\begin{equation}\label{eq1}
\|\mathcal{I}_{\lambda}f\|_{l^1(\mathbb{Z})}= \sum_{n\in \mathbb{Z} }\Big|  \sum_{m\in \mathbb{Z}_*} \frac{f(q(m,n))}{|m|^{\lambda }}     \Big| \leq \sum_{n\in \mathbb{Z} } \sum_{m\in \mathbb{Z}_*} \frac{|f(q(m,n))|}{|m|^{\lambda }}.     
\end{equation}
We define  the sets 
\begin{equation}
A_k:=\{(m,n)\in \mathbb{Z}_* \times \mathbb{Z}: q(m,n)=k\}
\end{equation}
 for each $k\in \mathbb{Z}$. These sets form a partition of $\mathbb{Z}_* \times \mathbb{Z}$, for clearly every element of   $ \mathbb{Z}_* \times \mathbb{Z}$ belongs to one of these sets $A_k$, and if $k\neq l$ then $A_k\cap A_l = \emptyset$. Therefore the last term of $\eqref{eq1}$  satisfy
\begin{equation}\label{eq2}
= \sum_{(m,n)\in \mathbb{Z}_* \times \mathbb{Z} } \frac{|f(q(m,n))|}{|m|^{\lambda }}     = \sum_{k\in \mathbb{Z} }|f(k)| \Big[ \sum_{(m,n)\in A_k} \frac{1}{|m|^{\lambda }}\Big].     
\end{equation}
 Since  $f\in l^1(\mathbb{Z})$, if we could show that 
 \begin{equation}\label{eq3}
\sum_{(m,n)\in A_k} \frac{1}{|m|^{\lambda }}\leq C     
 \end{equation}
 for all $k$, with a constant $C$ independent of $k$, we would have the desired estimate
$
 \|\mathcal{I}_{\lambda}f\|_{l^1(\mathbb{Z})}  
\leq C
\|f\|_{l^1(\mathbb{Z})}. 
$
Therefore  the problem of  estimates on $l^1(\mathbb{Z})$ essentially reduces to understanding the quantity
\begin{equation}\label{eq5}
\sum_{(m,n)\in A_k} \frac{1}{|m|^{\lambda }},
 \end{equation}
and this clearly is about the number of representations of $k$ by the form $q$, and the distribution and structure of these representations. If the number of representations is small, and if all except a fixed number of them have large first coordinates then this can be bounded by an absolute constant. We will use this idea to prove Theorem 1. If the number of representations is large or even infinite, then there still is hope if we can, by deeper study, show that the first coordinates of these representations grow fast. Incorporating this idea into the proof of Theorem 1 will lead us to  the proof of  Theorem 2. The next section is devoted to showing that with a fixed number of exceptions the  first coordinates of the representations are large. We will deduce this from a geometric and analytic investigation of quadratic forms conducted by solving them over the field of real numbers.  In section 4, using the classical theory of binary quadratic forms developed mostly in 19th century, we will study the number and structure of representations.

We note that the term in \eqref{eq5} is always finite if $\lambda >1$ and for a fixed $m$, the set $A_k$ contains at most a constant number of representations $(m,n)$. It will be seen in section 3 that this latter condition is satisfied by all forms under consideration in our two theorems.   It also   holds   for many forms of square discriminant.


 \section{Analysis and geometry of  Quadratic Forms}
In this section we  consider the representation  problem  over the field of real numbers, where it is much easier to understand. We  investigate the set $S_{q,w}$ as given in \eqref{sw}. This leads to quite different outcomes for positive definite forms and indefinite forms of nonsquare discriminant, therefore we investigate them separately. Further information on many of the issues discussed in this section can be found in analytic geometry books such as \cite{ag1,ag2,ag3}.

 We first discuss positive definite forms.
  Let $q(x,y)=ax^2+bxy+cy^2$  be positive definite, therefore $a,c$  are positive, and $-2\sqrt{ac}< b< 2\sqrt{ac}$. By (\ref{pdf12},\ref{pdf19}), the set 
    $S_w$ is empty if $w<0$, and contain only the origin if  $w=0$. Therefore to observe nontrivial cases we assume $w>0$. In this case $S_{w}$ is an ellipse centered at the origin, and as such it is a closed plane curve enclosing a strictly convex region. A line can intersect it at at most two points owing to this  strict convexity. 
    It always contains the points
      $(\pm\sqrt{w/a},0),(0,\pm \sqrt{w/c})$ regardless of the value $b$ takes, and   by convexity  the paralellogram with these points as  vertices  lies inside it. We  observe that the lines that contain the sides of this  parallelogram essentially dictate the behavior of $S_w$ as $b$ changes. Decreasing $b$ from zero to $-2\sqrt{ac}$ elongates the set  in the direction of the line  $y=\sqrt{a/c}x$, and incresing $b$  to $2\sqrt{ac}$ elongates it  in the direction of the line  $y=-\sqrt{a/c}x$. At limits the ellipse turns into the lines given by the sides of the parallelogram.

      We can write this curve as graphs of two functions: solving the equation $ax^2+bxy+cy^2=w$ for $y$ is possible  if and only if  $x^2\leq -4cw/\Delta$, and gives
        \begin{equation}\label{func1}
        y=f_1(x)=\frac{-bx+\sqrt{\Delta x^2+4cw}}{2c}, \ \ \ \ y=f_2(x)=\frac{-bx-\sqrt{\Delta x^2+4cw}}{2c}.
        \end{equation}
       The graph of $f_1$ lies above that of $f_2$  except at the endpoints, and they meet at the  endpoints. 
       On their domain of definition  $f_1$ is concave and   $f_2$ is convex.  
       
       We  state a lemma that will be crucial for the proof of our first theorem. It shows that except for  a fixed number of them, representations of integers by positive  definite forms have large first entries. The proof  essentially boils down to  the degree of the  form $q$.

\begin{lemma}
Let $q(x,y)= ax^2+bxy+cy^2$ be a positive definite binary quadratic form, and let $k$ be an integer. Then $q(x,y)=k$ has at most 4 solutions $(x,y)\in \mathbb{Z}^2$ satisfying 
\begin{equation}\label{lem1} 
|x| \leq \frac{|k|^{1/4}}{\sqrt{-\Delta(q)}}.
\end{equation}
\end{lemma}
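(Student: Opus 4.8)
The plan is to classify the integer solutions subject to \eqref{lem1} according to the value of the \emph{integer} $t:=|2cy+bx|$, rather than according to $x$ itself, feeding the hypothesis $|x|\le |k|^{1/4}/\sqrt{-\Delta}$ into the completed-square identity \eqref{pdf19}. For a solution $(x,y)$ of $q(x,y)=k$ that identity reads $(2cy+bx)^2=\Delta x^2+4ck=4ck-(-\Delta)x^2$.

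First I would clear away the trivial ranges of $k$: a positive definite form takes no negative value, so there is nothing to prove when $k<0$, and when $k=0$ the origin is the only solution; also, since $q$ is integral and positive definite, $a,c\ge 1$. So assume $k\ge1$. Now fix any $(x,y)\in\mathbb{Z}^2$ with $q(x,y)=k$ and $|x|\le |k|^{1/4}/\sqrt{-\Delta}$, and put $t=|2cy+bx|\in\mathbb{Z}_{\ge0}$. Then $t^2=4ck-(-\Delta)x^2$, and the hypothesis on $x$ gives $0\le(-\Delta)x^2\le(-\Delta)\cdot\frac{|k|^{1/2}}{-\Delta}=\sqrt{k}$, so $t$ is confined to the interval $J:=[\sqrt{4ck-\sqrt{k}},\,2\sqrt{ck}]$, which is genuine and nonempty since $4ck\ge4k>\sqrt{k}$ for $k\ge1$.

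The crux is the elementary estimate that $J$ has length strictly less than $1$:
\[
2\sqrt{ck}-\sqrt{4ck-\sqrt{k}}=\frac{\sqrt{k}}{2\sqrt{ck}+\sqrt{4ck-\sqrt{k}}}\le\frac{\sqrt{k}}{2\sqrt{ck}}=\frac{1}{2\sqrt{c}}\le\frac12,
\]
using $c\ge1$. Hence $J$ contains at most one integer, so there is a single value $t_0$ with the property that every solution subject to \eqref{lem1} satisfies $|2cy+bx|=t_0$. This makes the count immediate: the relation $t_0^2=4ck-(-\Delta)x^2$ forces $x^2=(4ck-t_0^2)/(-\Delta)$, allowing at most two values of $x$ (a number and its negative), and then $2cy+bx=\pm t_0$ allows at most two values of $y$ for each such $x$; so there are at most $2\cdot2=4$ solutions $(x,y)\in\mathbb{Z}^2$ with $|x|\le |k|^{1/4}/\sqrt{-\Delta}$. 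I expect the only step demanding any care to be the length bound for $J$ — this is exactly where the exponent $1/4$ and the normalization by $\sqrt{-\Delta}$ in \eqref{lem1} are used — while the final tally of $4$ is simply the two sign ambiguities forced by $q$ having degree $2$.
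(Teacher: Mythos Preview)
Your proof is correct, and at its core it rests on the same observation as the paper's: for any integer solution the quantity $2cy+bx$ is an integer, and the constraint $|x|\le|k|^{1/4}/\sqrt{-\Delta}$ pins that integer to a very short range. The difference is in the packaging. The paper argues geometrically: it writes the ellipse as two graphs $y=f_1(x),f_2(x)$, shows each graph stays within $1/(6c^{3/2})$ of its tangent line $l_i(x)=-\tfrac{b}{2c}x\pm\sqrt{k/c}$ on the relevant $x$-range, and then notes that integer points lie on the parallel family $y=-\tfrac{b}{2c}x+\tfrac{j}{2c}$ with spacing $1/(2c)$, so each strip catches at most one such line; combined with the fact that a line meets the ellipse in at most two points this yields four. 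You instead work directly with $t=|2cy+bx|$, bound $t^2$ to $[4ck-\sqrt{k},4ck]$ via the completed-square identity, and observe that the resulting interval for $t$ has length at most $1/(2\sqrt{c})<1$, hence contains at most one integer; the four then come from the two sign choices in $x$ and in $2cy+bx$. Your route is shorter and entirely algebraic, avoids the tangent-line approximation and the appeal to convexity of the ellipse, and makes the role of the exponent $1/4$ very transparent in the length estimate for $J$. The paper's version has the advantage of a visual picture that it later recycles for the indefinite case (Lemma~2), but for the statement at hand your argument is at least as clean.
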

\begin{proof} 
 The statement is clear if $k\leq 0$, we therefore assume $k$ to be positive. The solutions we are looking for lie on the  graphs of the functions $f_1,f_2$ in \eqref{func1}. Owing to  the condition \eqref{lem1},
any of these solutions can lie on only one of these graphs. 
The lines 
\[l_1(x)=-\frac{b}{2c}x+\sqrt{\frac{k}{c}}, \quad \quad l_2(x)=-\frac{b}{2c}x-\sqrt{\frac{k}{c}} \]
are respectively tangent to $f_1,f_2$ at $x=0$.
   We will  prove that  $f_1,f_2$  stay very close to these lines for $x$ satisfying \eqref{lem1}. The differences   $|f_i(x)-l_i(x)|, \ i=1,2$ for such $x$  are bounded   by
\begin{equation}
\begin{aligned}
\sqrt{\frac{k}{c}}-\frac{\sqrt{\Delta x^2+4ck}}{2c}&=  \Big(\frac{k}{c}\Big)^{1/2}-\Big(\frac{{\Delta x^2}}{4c^2} +\frac{k}{c}\Big)^{1/2} \\ &=-\frac{{\Delta x^2}}{4c^2} \cdot \Big[\Big(\frac{{\Delta x^2}}{4c^2} +\frac{k}{c}\Big)^{1/2}+  \Big(\frac{k}{c}\Big)^{1/2} \Big]^{-1} \\ &\leq -\frac{{\Delta x^2}}{4c^2} \cdot \Big[ \frac{3}{2} \Big(\frac{k}{c}\Big)^{1/2} \Big]^{-1} \\ & \leq \frac{1}{6c^{3/2}}.
\end{aligned}
\end{equation}
Therefore our solutions satisfying $y=f_i(x)$ lie inside the set
$S_i:=\{(x,y)\in \mathbb{R}^2:  |y-l_i(x)|  \leq {1}/{6c^{3/2}}    \} $
for $i=1,2.$

However, if $(m,n)\in \mathbb{Z}^2$, then $2cn+bm=j\in \mathbb{Z} $. Thus 
\[n=-\frac{b}{2c}m+\frac{j}{2c}.\]
Therefore  if we consider the collection of parallel lines 
\[\Big\{(x,y)\in \mathbb{R}^2:  y=-\frac{b}{2c}x+\frac{j}{2c}\Big\},\]
every element $(m,n)\in \mathbb{Z}^2$ lies on exactly one of these lines. But the sets $S_1,S_2$ each can contain at most one line from this collection. Therefore we have at most 4 solutions.

\end{proof}

We now investigate the case of indefinite forms of nonsquare discriminant.
We assume $\Delta(q)=b^2-4ac > 0 $ to be  nonsquare, therefore both $a,c$ are nonzero. We will assume $c>0$, and investigate the set $S_w$ for any  real number $w$, clearly the case  $c<0$ follows from this. As will be seen the sign of $w$ mostly determines the geometry of the set $S_w$. 

Let $w=0$. Then for a fixed $x$ the equation
$ax^2+bxy+cy^2=0$ is satisfied if and only if 
\begin{equation}\label{lines}
y=j_1(x)=\frac{-b+\sqrt{\Delta}}{2c}x,  \quad \quad \quad y=j_2(x)=\frac{-b-\sqrt{\Delta}}{2c}x. 
 \end{equation} 
Therefore $S_w$ consists of these two lines. When $b=0$ these lines have slopes that add up to zero, as  $b$ decreases they turn counterclockwise, and as $b$ increases they turn clockwise. We will see that these lines govern the behavior of $S_w$ even for $w$ nonzero.

Let $w>0$. In this case $S_w$ is a hyperbola centered at the origin with the lines in \eqref{lines} as asymptotes. As such it is a smooth plane curve. The graphs of the functions
\begin{equation}\label{func2}
y=g_1(x)=\frac{-bx+\sqrt{\Delta x^2+4cw}}{2c},  \quad \quad \quad y=g_2(x)=\frac{-bx-\sqrt{\Delta x^2 +4cw}}{2c},  
\end{equation}
give the two disconnected  subsets of the hyperbola, with $g_1$ being above both asymptotes and $g_2$ being below both of them.

 Let $w<0$. In this case our set is the conjugate of the hyperbola we would obtain for $-w$. The two disconnected subsets of $S_w$ lie between the asymptotes. The equation $ax^2+bxy+cy^2=w$ can be  satisfied only if $x^2\geq -4cw/\Delta.$

We  investigate how many times a line can intersect $S_w$. If $w=0$, then  $S_w$ itself is union of two lines intersecting at the origin, therefore a different line can intersect it at most twice. 
For $w\neq 0$ we will consider the equation for a generic line and plug it into the equation given by our form. Any line in $\mathbb{R}^2$ has an equation   of the form either $y=ux +v$ or $x=uy+v$ where $u,v$ are real numbers.   Plugging these equations into $ax^2+bxy+cy^2=w$ gives respectively
\begin{equation}\label{linin}
\begin{aligned}
x^2(cu^2+bu+a)+x(2cuv+bv)+cv^2&=w,  \\ y^2(au^2+bu+c)+y(2auv+bv)+av^2&=w.
\end{aligned} 
\end{equation}
 The  coefficients of $x^2$ and $x$, or $y^2$ and $y$ cannot both be  zero since $w\neq 0$ and $\Delta>0$. Therefore we have at most 2  intersections. With this information at hand we proceed to state an analogue of our first lemma for the case of indefinite forms of nonsquare discriminant.

\begin{lemma}
Let $q(x,y)= ax^2+bxy+cy^2$ be an indefinite form of nonsquare discriminant, and let $k$ be an integer. Then $q(x,y)=k$ has at most 4 solutions $(x,y)\in \mathbb{Z}^2$ satisfying 
\begin{equation}\label{lem2} 
|x| \leq \frac{|k|^{1/4}}{\sqrt{\Delta(q)}}.
\end{equation}
\end{lemma}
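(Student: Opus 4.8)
The plan is to mimic the proof of Lemma 1, replacing the roles of the two concave/convex branches of the ellipse with the two branches $g_1, g_2$ of the hyperbola (and, when $w<0$, with the two branches of the conjugate hyperbola), and using the same lattice-line pigeonhole argument. So first I would reduce to $k \neq 0$: if $k = 0$, then $S_0$ is the union of the two lines in \eqref{lines}, which (having irrational slopes, since $\Delta$ is nonsquare) pass through no lattice point other than the origin, so $q(x,y)=k$ has exactly one integer solution and we are done. Hence assume $k \neq 0$; by replacing $q$ with $-q$ if necessary (which negates $k$ but does not change the relevant inequality, as it is stated in terms of $|k|$ and $|\Delta|$) we may assume the relevant branches are those given by graphs over the $x$-variable, namely $g_1, g_2$ from \eqref{func2} when $k > 0$, and the analogous pair for the conjugate hyperbola when $k<0$.

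Next, as in Lemma 1, I would localize near $x = 0$. The key point is that for $x$ satisfying \eqref{lem2} the two branches cannot both contribute a solution to the same lattice line, and each branch stays within a thin horizontal strip about its tangent line at $x=0$. Concretely, the lines $l_1(x) = -\frac{b}{2c}x + \sqrt{k/c}$ and $l_2(x) = -\frac{b}{2c}x - \sqrt{k/c}$ (when $k>0$) are tangent to $g_1, g_2$ at the origin, and the same algebraic identity used in Lemma 1 gives
\[
|g_i(x) - l_i(x)| = \Bigl| \sqrt{k/c} - \tfrac{\sqrt{\Delta x^2 + 4ck}}{2c}\Bigr| = \frac{\Delta x^2}{4c^2}\Bigl[ \bigl(\tfrac{\Delta x^2}{4c^2} + \tfrac{k}{c}\bigr)^{1/2} + \bigl(\tfrac{k}{c}\bigr)^{1/2}\Bigr]^{-1} \leq \frac{\Delta x^2}{6 c^{3/2} k^{1/2}} \leq \frac{1}{6 c^{3/2}},
\]
the last step using $|x| \le |k|^{1/4}/\sqrt{\Delta}$, so $\Delta x^2 \le \sqrt{k}$. (A small sign caveat: here $\Delta > 0$, so the difference $\sqrt{k/c} - \sqrt{\Delta x^2 + 4ck}/(2c)$ is nonpositive, but its absolute value is exactly the displayed quantity, so the bound goes through verbatim. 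For $k<0$ one works with the conjugate hyperbola, parametrized as graphs over $x$ on the region $x^2 \ge -4ck/\Delta$; the tangent-line estimate is entirely analogous, with $\sqrt{|k|/c}$-type constants — or one simply applies the $k>0$ case to $-q$.)

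Then I would run the pigeonhole exactly as in Lemma 1: for any integer point $(m,n)$ we have $2cn + bm \in \mathbb{Z}$, so every lattice point lies on exactly one line of the family $\{ y = -\frac{b}{2c}x + \frac{j}{2c} : j \in \mathbb{Z}\}$; each of the two strips $S_i := \{ |y - l_i(x)| \le 1/(6c^{3/2})\}$ contains at most one line of this family (since consecutive lines are separated by $1/(2c) > 2 \cdot 1/(6c^{3/2})$ once $c \ge 1$ — and for $|c|=1$ the strip half-width is $1/6 < 1/4$, so still at most one line fits), and each line meets $S_w$ in at most $2$ points by the intersection analysis preceding the lemma. Combined with the observation that the two strips are disjoint over the relevant $x$-range, this yields at most $4$ integer solutions satisfying \eqref{lem2}.

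The main obstacle I anticipate is purely bookkeeping rather than conceptual: handling the $k<0$ case cleanly (where the curve is the conjugate hyperbola and the natural graph parametrization is over the region $|x| \ge \sqrt{-4ck/\Delta}$ rather than near $x=0$), and double-checking that the normalization $|x| \le |k|^{1/4}/\sqrt{\Delta}$ — with $\Delta$ now positive — still forces $\Delta x^2 \le \sqrt{|k|}$ so that the thin-strip bound $1/(6c^{3/2})$ survives. The cleanest route is probably to dispatch $k<0$ by the substitution $q \mapsto -q$, so that only the $k>0$ geometry (handled above, verbatim from Lemma 1) needs to be written out, and to note explicitly that the strip half-width $1/(6|c|^{3/2})$ is always strictly less than half the lattice-line spacing $1/(4|c|)$, which is what makes the "at most one line per strip" step legitimate for every admissible $c$.
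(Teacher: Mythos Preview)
Your approach matches the paper's for $k=0$ and $k>0$: the tangent-line estimate and the lattice-line pigeonhole are exactly Lemma~1's argument transplanted to the hyperbola (the paper even gets the slightly sharper strip half-width $1/(8c^{3/2})$, since with $\Delta>0$ the bracketed square-root sum is at least $2(k/c)^{1/2}$ rather than $(3/2)(k/c)^{1/2}$). The loose end is the $k<0$ case. Your proposed substitution $q\mapsto -q$ does not close the loop: it sends $(c,k)\mapsto(-c,-k)$, so after substituting you have $-k>0$ but the $y^2$-coefficient is now $-c<0$, and your tangent-line computation (which writes $\sqrt{k/c}$ and so tacitly needs $c>0$) no longer applies. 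The paper instead fixes $c>0$ at the outset (reducing $c<0$ to this via $q\mapsto -q$, $k\mapsto -k$) and then observes --- as you nearly do yourself in your parenthetical about the conjugate hyperbola --- that when $c>0$ and $k<0$ every point of $S_k$ must satisfy $x^2\geq -4ck/\Delta = 4c|k|/\Delta$; since $4c|k|>\sqrt{|k|}$ for integers $c\geq 1$, $|k|\geq 1$, this already contradicts \eqref{lem2}, so there are \emph{no} solutions in that range and no further geometry is needed.
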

\begin{proof}
We  assume $c>0,$ the case $c<0$ follows from this by considering $-q$ and $-k$. We proceed in three cases. We first assume $k=0$. In this case \eqref{lem2} implies $x=0$, and  since $c\neq 0$, this implies $y=0$. Thus   the origin is the only solution.

We  assume $k>0$. Then our solutions lie on the graphs of the  functions in \eqref{func2},
 and since these graphs do not intersect, each solution can lie only on one of these graphs. The lines
 \begin{equation*}
 y=l_1'(x)=-\frac{b}{2c}x+\sqrt{\frac{k}{c}},  \quad \quad \quad y=l_2'(x)=-\frac{b}{2c}x-\sqrt{\frac{k}{c}}, 
  \end{equation*} 
 are tangent  respectively to $g_1,g_2$ at $x=0$.  The difference  between  $g_1,g_2$ and these lines 
 for a fixed $x$  satisfying \eqref{lem2} can be shown   by the same arguments as in Lemma 1 to satisfy
 \begin{equation}
 \begin{aligned}
 \frac{\sqrt{\Delta x^2+4ck}}{2c}-\sqrt{\frac{k}{c}}  \leq \frac{1}{8c^{3/2}}.
 \end{aligned}
 \end{equation}
 Then   arguments similar to those in Lemma 1 allow us to reduce the problem  to  the number of intersections $S_k$ can have with two lines, and  our investigation above lets us conclude that there can be at most 4 solutions.
 
 Finally let $k<0$. In this case for  any element $(x,y)\in S_k$ we must have $x^2\geq -4ck/\Delta$, but this contradicts  \eqref{lem2}, therefore there is no solution.

\end{proof}


\section{Arithmetic of quadratic forms}

 In this section we concentrate on understanding the set  $R_{q,k}$. This  is a topic that attracted great interest in the 19th and early 20th centuries. Through works of Gauss, Jacobi, Dirichlet, Pall and others we now know that  properties of $R_k$ are ultimately tied  to the theory of quadratic residues, and to the  automorphs of $q$.  We will derive this connection, and describe its use. We will mostly use the terminology and notation of \cite{ld}, and also recommend that work for a  complete exposition of binary quadratic forms that starts from very basics of number theory. For a more recent treatment see \cite{db}.

Let $q(m,n)=am^2 +bmn+cn^2$ be a  binary quadratic form.  We will denote $q$ by $[a,b,c]$, and associate to it the matrix
\begin{equation*}
[q]:=\begin{bmatrix}
	  2a     & b \\
	 b & 2c  
\end{bmatrix}.
\end{equation*}
Clearly then   $\Delta(q)=-\det[q]$.   We consider  a matrix $[T]$ of  integral entries given by
\begin{equation*}
[T]:=
\begin{bmatrix}
	    \alpha   & \beta \\
	\gamma & \delta  
\end{bmatrix},
\end{equation*}
and the linear map given by this matrix   
\begin{equation*}
\begin{bmatrix}
	   m  \\
	n  
\end{bmatrix}=
\begin{bmatrix}
	    \alpha   & \beta \\
	\gamma & \delta  
\end{bmatrix}
\begin{bmatrix}
	    M \\
        N  
\end{bmatrix}.
\end{equation*}
This matrix turns our form $q$ into   $Q(M,N)=AM^2+BMN+CN^2$,  if we replace $m,n$ by their equivalents $\alpha M+\beta N, \gamma M+\delta N$ 
as given by the matrix multiplication. We easily calculate $A,B,C$ to find
\begin{equation*}
A=a\alpha^2+b\alpha\gamma +c\gamma^2, \ \ B=2a\alpha\beta +b(\alpha\delta + \beta\gamma) +2c\gamma\delta,  \ \ C=a\beta^2+b\beta\delta +c\delta^2.
\end{equation*}
We denote the map thus induced by a matrix $[T]$ with $T$,  thus we have $Tq=Q$.  The action of this map in terms of matrices is given by 
\begin{equation}\label{revc41}
[Q]=[T]^*[q][T],
\end{equation}
where $[T]^*$ stands for the transpose of $[T].$
 If the determinant of $[T]$ is $\pm 1$, then  $[T]^{-1}$ also have integral entries and induces a map.  It follows from \eqref{revc41} that in this case $T$ is a bijection from the set of integral binary quadratic forms to itself, with the  inverse being the map induced by $[T]^{-1}.$   Then both the integers represented, and the number of representations for each such integer are the same for the forms $q$ and $Q=Tq.$ Furthermore their  discriminants are the same.  If the determinant is 1 we say that these two forms are properly equivalent, if it is $-1$ then we say that they are improperly equivalent. We sometimes just use the term equaivalent to mean properly equivalent. Clearly proper equivalence is an equivalence relation, so it partitions forms of a given discriminant into equivalence classes. In the theory of quadratic forms class of a form means the equivalence class with respect to this relation. If $[T],[S]$ are  two  matrices of determinant 1 that   transform  $q$ to $Q$,  then
 \[[q]= [S]^{-1*}[Q][S]^{-1}=[S]^{-1*}[T]^*[q][T][S]^{-1}=([T][S]^{-1})^*[q][T][S]^{-1}.\] 
If a matrix of determinant $1$ fixes a form, it is called an automorph of that form. Thus $[T][S]^{-1}$ is an automorph of $q$. If we denote this automorph by $[A]$, we obtain the result $[T]=[A][S]$, and thus matrices of determinant 1 that transform  $q$ to $Q$ are exactly those given by finding just one such matrix, and multiplying it with the automorphs of $q$.

Let $g\in \mathbb{N}$ satisfy $g^2|k.$  Then  $(m,n)\mapsto (m/g,n/g)$ gives a bijection between the representations $(m,n)$ of $k$ with $g=\gcd(m,n)$ and the proper representations of $k/g^2$. Therefore if $k\neq 0$, then
 \begin{equation}\label{union}
R_k=\bigcup_{g^2|k}g\cdot R'_{k/g^2}.
\end{equation}
If $k=0$,  the set $R_k$ consists of the union above and the origin.

 We first consider  $R_k$  for the case $k=0$.
 For definite forms    the only representation is $(0,0)$, this is clear from the formulas (\ref{pdf12}, \ref{pdf19}). For the form with all coefficients zero, every element of $\mathbb{Z}^{2}$ is a representation. When the discriminant is nonnegative and  the form has at least one nonzero coefficient, the representations are intersections of one or two lines given by linear factors of the form with $\mathbb{Z}^{2}$. When the discriminant is nonsquare  by  \eqref{lines} the origin is the only intersection. For a square discriminant, for each line they are   given by $(jm,jn), \ j\in\mathbb{Z}$ where $\gcd(m,n)=1.$  
 
    We now  assume $k\neq 0$, and concentrate on understanding the proper representations $R'_k$. If there is a pair $(\alpha,\gamma)$ such that  $q(\alpha,\gamma)=k$ and $\gcd(\alpha,\gamma)=1$, then we can find a pair $\beta,\delta$ with
$\alpha\delta-\gamma\beta=1$. If $\beta',\delta'$ is another pair with the same property then
$\alpha\delta-\gamma\beta=\alpha\delta'-\gamma\beta'$, and thus $ \alpha(\delta'-\delta)= \gamma(\beta'-\beta). $
Since $\gcd(\alpha,\gamma)=1$, at least one of these is nonzero,  assume $\alpha$ is nonzero. Then $\alpha$ divides $\beta'-\beta$. So we must have $\beta'=\beta+t\alpha$ for some uniquely determined  integer $t$. This then implies $\alpha(\delta'-\delta)=\gamma t\alpha$, and since $\alpha$ is not zero, we must have $\delta'=\delta+t\gamma$. If $\gamma$ is nonzero we run the same argument to conclude that in any case we have a uniquely determined integer $t$ such that 
\begin{equation}\label{bd}
\beta'=\beta+t\alpha, \ \ \ \ \ \delta'=\delta+t\gamma. 
\end{equation}
Conversely   any integer $t$ determines unique numbers  $\beta',\delta'$ given by \eqref{bd} that satisfy $\alpha\delta'-\gamma\beta'=1.$
 The matrix 
\begin{equation}\label{matr1}
\begin{bmatrix}
	    \alpha   & \beta \\
	\gamma & \delta  
\end{bmatrix}\end{equation}
then have determinant 1, and transforms  $q$ to an equivalent form $[k,u,v]$, where
\[k=a\alpha^2+b\alpha\gamma +c\gamma^2, \ \ u=2a\alpha\beta +b(\alpha\delta + \beta\gamma) +2c\gamma\delta,  \ \ v=a\beta^2+b\beta\delta +c\delta^2.\]
 On the other hand, the matrix
\[\begin{bmatrix}
	    \alpha   & \beta' \\
	\gamma & \delta'  
\end{bmatrix}\]
transforms $q$ to the form $[k,u',v']$ with 
\[u'=2a\alpha\beta' +b(\alpha\delta' + \beta'\gamma) +2c\gamma\delta',  \ \ \ v'=a(\beta')^2+b\beta'\delta' +c(\delta')^2.\]
We observe that replacing $\beta'=\beta+t\alpha$ and $\delta'=\delta+t\gamma$ we obtain
\begin{equation}\label{cong6}
\begin{aligned}
u'&=2a\alpha\beta' +b(\alpha\delta' + \beta'\gamma) +2c\gamma\delta'\\
&=2a\alpha(\beta +t\alpha) +b[\alpha(\delta+t\gamma) + (\beta +t\alpha)\gamma] +2c\gamma(\delta+t\gamma)\\
&=2a\alpha\beta +b(\alpha\delta + \beta\gamma) +2c\gamma\delta + 2t[a\alpha^2 +b\alpha\gamma +c\gamma^2]\\
&=u+2tk.
\end{aligned}
\end{equation}
Therefore there is a unique choice of the pair $\beta,\delta$ for which  $0\leq u< 2|k|$.  Since the discriminant remains fixed, we also have the relation $u^2-4kv=\Delta(q)$.   Since $k\neq 0$, given $u$ there can be a unique  $v$ solving this equation, therefore  there is a one-to-one and onto correspondence between solutions $u,v$ of these two  equations and solutions  $u$ of 
\begin{equation}\label{cong5}
u^2\equiv \Delta(q) \ \  (\text{mod}\ 4|k|), \quad \text{and}  \quad 0\leq u< 2|k|.
\end{equation}
 Therefore, going forward we will employ these latter equations that do not burden us with the  coefficient $v$ that plays no role in this theory.

 We sum up what we have uncovered  as follows: for a representation $(\alpha,\gamma)$, by choosing $\beta,\delta$ as prescribed, we obtain a  unique  matrix   \eqref{matr1}
of determinant 1 that  transforms $q$ to an equivalent form $[k,u,v]$ satisfying the equations \eqref{cong5}.
We  thus see that by this process   representations are mapped injectively to   matrices,  which in turn are mapped to   quadratic forms satisfying a congruence. We observe however that the mapping of  matrices to  forms is definitely not injective, for the matrix
\[\begin{bmatrix}
	    -\alpha   & -\beta \\
	-\gamma & -\delta  
\end{bmatrix}\]
too would  transfrom $q$ to $[k,u,v]$. Indeed, as we have observed,  matrices of determinant $1$ transforming $q$ to $[k,u,v]$ are exactly those obtained   by finding just one such matrix and multiplying it with the automorphs of $q$. 
Conversely,  when  we obtain  a form $[k,u,v]$ satisfying \eqref{cong5}, there are two possibilities: either it is not equivalent to $q$, in which case there is no corresponding representation, or it is equivalent to $q$, whence by finding one matrix  transforming $q$ to $[k,u,v]$, and multiplying it with the automorphs of $q$ we obtain all such matrices, and the first columns of these matrices give the representations corresponding to $[k,u,v]$. We repeat  this for every form satisfying  \eqref{cong5},  and obtain  a number of matrices. Of these matrices no two can be the same, for if two matrices map $q$ to different forms they must be different, and if they map $q$ to the same form, this would imply that the automorphs we used to obtain them are the same. Furthermore, no two of these matrices can have the same first column, for this is forbidden by  \eqref{cong6}. Thus the first columns of  these matrices are exactly the desired proper representations. Hence to uncover the number and structure of  proper representations we need to understand two issues:  how many forms  $[k,u,v]$ satisfying conditions \eqref{cong5} exist, and   how many automorphs of $q$ exist.
  If we could find answers to these two issues, by simply multiplying these answers we would get an upper bound for $R'_k$. Further, if   we knew that every solution of \eqref{cong5}
is equivalent to  $q$, we  could find the number  of proper representations exactly. We note however that  we will never need such exact knowledge of representations in this work, we will only need  upper bounds on their number, lower bounds for their first entries, and  their structure under the assumption that they exist. 

 The answer to the first issue is purely about quadratic residues, and depends on the relation between $\Delta$ and  $k$.  For the case $\gcd(\Delta, k)=1$ the answer was given  by Dirichlet and as a part of the classical theory of binary quadratic forms can be found in any basic number theory book, such as \cite{ld}. The  complete answer  was given by G. Pall in \cite{gp1,gp2}. We here lay forth a summary of their studies. We define for $t\in \mathbb{Z},s\in \mathbb{N}$ the function
 \begin{equation}
 \Gamma_t(s)=\# \{u:   u^2\equiv t \ (\text{mod} \ 4s), \quad 0 \leq u<2s\}.
 \end{equation}
Since $u^2\equiv (u+2ls)^2 \ (\text{mod} \ 4s)$, we have
  \begin{equation}
   \Gamma_t(s)=\frac{1}{2}\# \{u:   u^2\equiv t \ (\text{mod} \ 4s)\}.
   \end{equation}
If $t$ gives the remainders $2,3$ when divided by $4$, the definition of our function implies a perfect square giving these remainders when divided by $4$, therefore the function is uniformly zero for all values of $s$ in these cases. Therefore we are left with the cases when $t$ gives the remainders $0,1$, and since  all discriminants  give these remainders when divided by $4$,  these  are the cases  that are of importance to us. We immediately observe that for such $t$ we have $\Gamma_t(1)=1$. For larger $s$ we consider the prime factorization  $s=p_0^{a_0}p_1^{a_1}\ldots p_j^{a_j}$ with $p_0=2,\ a_0\geq  0$ and $a_i>0$ for $1\leq i\leq j$. Then by Theorem 16 of \cite{ld}
\begin{equation}
\begin{aligned}
\Gamma_t(s)&=\frac{1}{2}\# \{u:   u^2\equiv t \ (\text{mod} \ p_0^{a_0+2}p_1^{a_1}\ldots p_j^{a_j})\}\\ 
&= \frac{1}{2}\# \{u:   u^2\equiv t \ (\text{mod} \ p_0^{a_0+2})\}\prod_{i=1}^j \# \{u:   u^2\equiv t \ (\text{mod} \ p_i^{a_i})\}.
\end{aligned}
\end{equation}
Since $\Gamma_t(1)=1$, again by the same theorem for $i>0$ we have 
\begin{equation}
\begin{aligned}
\Gamma_t(p_i^{a_i})&=\frac{1}{2}\# \{u:   u^2\equiv t \ (\text{mod} \ 4p_i^{a_i})\}\\ 
&= \frac{1}{2}\# \{u:   u^2\equiv t \ (\text{mod} \ 4)\} \# \{u:   u^2\equiv t \ (\text{mod} \ p_i^{a_i})\}\\ &=\# \{u:   u^2\equiv t \ (\text{mod} \ p_i^{a_i})\}.
\end{aligned}
\end{equation}
Therefore
\begin{equation}
\Gamma_t(s)=\prod_{i=0}^j \Gamma_t(p_i^{a_i}).
\end{equation}
Thus it remains to compute $\Gamma_t(p^{a})$ for prime $p$ and positive $a$. When $t=0$ we easily see that  $\Gamma_t(p^{a})=p^{\lfloor a/2\rfloor }$. For $t$ nonzero, the results  depend very much on divisibility relations between $p$ and $t$, and can be obtained via use of Theorem 17 of \cite{ld} together with basic arguments of modular arithmetic. The full results are tabulated  in section 3 of \cite{gp1} in nine cases. We remark that although in that work $t$ is assumed to be negative, the results therein  depend   not on the  sign of $t$, but on the divisibility relations stated for each case, and thus follow through for positive $t$ as well. We observe that  the common bound  $\Gamma_t(p^{a}) \leq 2p^{\lfloor c/2\rfloor}$, where $c$ is the power of $p$ as a factor of $t$, is true for all cases. Therefore $\Gamma_t(s) \leq d(s)\sqrt{|t|}$, where the function $d$ gives the number of positive divisors. It is known, see \cite{hw}, that for every $\varepsilon>0$ we have $d(s)\leq C_{\varepsilon}s^{\varepsilon}$. Therefore for every  $\varepsilon>0$  we have
\begin{equation}\label{revc42}
\Gamma_t(s) \leq C_{\varepsilon}s^{\varepsilon}\sqrt{|t|} .
\end{equation}

We  turn to the second question. If  $\Delta<0$, or if $\Delta >0$ is a square  then the form has at most 6 automorphs, see  \cite{ld,gp1,gp2}. 
Thus  from  \eqref{revc42} we obtain 
\begin{equation*}
\#R'_{k}\leq C_{\varepsilon}|k|^{\varepsilon}\sqrt{|\Delta|}.
\end{equation*}
 Therefore   by \eqref{union} for every $\varepsilon>0$ we can write 
\begin{equation}\label{fin1}
\#R_{k}=\sum_{g^2 |  k}\#R'_{k/g^2} \leq d(|k|)C_{\varepsilon}|k|^{\varepsilon}\sqrt{|\Delta|}\leq C_{\varepsilon}|k|^{\varepsilon}\sqrt{|\Delta|}.
\end{equation}

If $\Delta$ is zero or positive nonsquare, there can be  infinitely many automorphs.  But  the automorphs are generated in a very structured way, and  this will allow us to extract information about the structure of representations. We  lay forth the theory of automorphs for the case $\Delta$ a positive nonsquare integer.

We further  assume that $q$ is primitive, the non-primitive case  follows thereafter.  Since $\Delta$ is positive and non-square, it is at least $5$.  By Theorem 87 of \cite{ld}, a matrix 
\[\begin{bmatrix}
	    \alpha   & \beta \\
	\gamma & \delta  
\end{bmatrix}\]
is an automorph of $q$ if and only if 
\[\alpha=(t-bu)/2, \ \ \beta=-cu \ \ \gamma=au, \ \ \delta=(t+bu)/2\] 
where $t,u$ are integer solutions of $t^2-\Delta u^2=4$. Solutions of this equation turn out to be very structured. We observe that  $(\pm 2,0)$ are solutions for this equation, and that there can be no solution with $t=0$. By Theorem 88 of \cite{ld}, the equation have a solution $(t,u)$ with $t\neq 0,u\neq 0$. If $(t,u)$  is a solution, then $(-t,u),(t,-u),(-t,-u)$ are also  solutions. Therefore there is a solution with both entries positive.   If  $(t,u),(t',u')$ are two such solutions then  $t<t'$ implies $u<u'.$ Therefore there has to be a  solution $(T,U)$ with both entries positive and  minimum. This solution  is called the least positive  solution of the equation. Since we have $\Delta \geq 5 $ we must have $U\geq 1, T \geq 3.$  By Theorem 89 of \cite{ld}, all solutions $(t,u)$ of $t^2-\Delta u^2=4$ are given by 
\begin{equation}\label{autin1}
(t+\sqrt{\Delta}u)=i 2^{-j+1}(T+\sqrt{\Delta}U)^j, \quad  i=\pm1, \quad j=0,\pm 1,\pm 2 \ldots 
\end{equation}
and the automorphs corresponding to these solutions are given by
\begin{equation}\label{autin2}
 i[A]^j, \quad i=\pm1, \quad j=0,\pm 1,\pm 2 \ldots 
\end{equation}
with $[A]$ being the automorph corresponding to $(T,U)$. The only solutions $(t,u)$ with one entry zero are $(\pm2,0)$, and these come from the cases $i=\pm1,j=0$, and   if $(t,u)$ is a solution with both entries positive, then  $i,j$ corresponding to this solution are both positive. In this case $(-t,-u)$ is obtained by taking $-i,j$; $(t,-u)$ by $i,-j$; and $(-t,u)$ by $-i,-j$. Since $(T+\sqrt{\Delta}U)/2 >2$, different pairs of $i,j$ cannot give the same pair $(t,u)$. Therefore for each pair of $i,j$ we get a different solution from \eqref{autin1}.  We observe that no two   pairs $(t,u)\neq (t',u')$ can give rise to the same automorph, for since $a,c$ are both nonzero, we immediately get   different automorphs if $u\neq u'$, and if $u=u'$, then we must have $t\neq t'$, and this leads to different terms on the diagonals of corresponding automorph matrices.


\section{The case of positive definite forms}
In this section we will prove the first theorem. We will further the investigation of the second section with the tools obtained from analytic, geometric and arithmetic study of positive definite quadratic forms in sections 3 and 4. Specifically we will use Lemma 1 and \eqref{fin1} to prove the estimate \eqref{t1}. Then we will  provide, using arithmetic properties of specific forms and summability properties of certain functions, counterexamples showing the sharpness part of the  theorem.
\begin{proof}
If $p=\infty$ then, 
\begin{equation*}
\begin{aligned}
\|\mathcal{I}_{\lambda}f\|_{\infty}=\sup_{n\in \mathbb{Z}}\Big|\sum_{m\in \mathbb{Z}_*} \frac{f(q(m,n))}{|m|^{\lambda }}\Big|    \leq \|f\|_{\infty}\sum_{m\in \mathbb{Z}_*} \frac{1}{|m|^{\lambda }}  = C_{\lambda}\|f\|_{\infty}.
\end{aligned}
\end{equation*}
As is seen clearly, in this case our constant is entirely independent of the form $q$.

For the case  $p=1$  we will continue the analysis in section 2. We note that since the form is positive definite, the sets $A_k$  need to be considered only for $k\in \mathbb{N}$.
We partition the sets $A_k$ introduced there as follows
\[A_k':=\{(m,n)\in A_k : |m|\leq |k|^{1/4}(-\Delta)^{-1/2}   \}, \ \quad \quad  A_k'':=A_k\setminus A_k'.  \]
Then the quantity \eqref{eq5} satisfies by Lemma 1 

\begin{equation*}
\begin{aligned}
\sum_{(m,n)\in A_k} \frac{1}{|m|^{\lambda }}&=\sum_{(m,n)\in A_k'} \frac{1}{|m|^{\lambda }}+\sum_{(m,n)\in A_k''} \frac{1}{|m|^{\lambda }} \leq 4 + \sum_{(m,n)\in A_k''} \frac{1}{|m|^{\lambda }}.
\end{aligned}
 \end{equation*}
We estimate the cardinality of the set $A_k''$  by taking  $\varepsilon$ in \eqref{fin1}  to be $\lambda/8$. We then have 
\begin{equation*}
\begin{aligned}
 \sum_{(m,n)\in A_k''} \frac{1}{|m|^{\lambda }}\leq C_\lambda k^{\lambda/8}\sqrt{|\Delta|} k^{-\lambda/4}|\Delta|^{\lambda/2} = C_{\lambda}|\Delta|^{(\lambda+1)/2}k^{-\lambda/8}\leq C_{\lambda,\Delta}.
\end{aligned}
 \end{equation*}
Therefore
\begin{equation}\label{rev11}
\sum_{(m,n)\in A_k} \frac{1}{|m|^{\lambda }}\leq C_{\lambda,\Delta},
\end{equation}
and we conclude that 
\[  \|\mathcal{I}_{\lambda}f\|_1 \leq C_{\lambda,\Delta}\|f\|_1. \]

We now handle the case $1<p<\infty.$ Broadly, we proceed as in the case $p=1$, only significant difference being use of Hölder inequality to make utilization of the sets $A_k$ possible. We let $\lambda'=\lambda-1+p^{-1}$. Then

\begin{equation*}
\begin{aligned}
\|\mathcal{I}_{\lambda}f\|_{p}^p=\sum_{n\in \mathbb{Z}}\Big|\sum_{m\in \mathbb{Z}_*} \frac{f(q(m,n))}{|m|^{\lambda }}\Big|^p \leq \sum_{n\in \mathbb{Z}}\Big(\sum_{m\in \mathbb{Z}_*} \frac{|f(q(m,n))|}{|m|^{\lambda'/2 }}\frac{1}{|m|^{1-p^{-1}+\lambda'/2}}\Big)^p.
\end{aligned}
\end{equation*}
We  apply the Hölder inequality to bring the exponent $p$ inside the paranthesis
 
\begin{equation*}
\begin{aligned}
\leq \sum_{n\in \mathbb{Z}}\Big[\sum_{m\in \mathbb{Z}_*} \frac{|f(q(m,n))|^p}{|m|^{\lambda'p/2 }}\Big]\Big[\sum_{m\in \mathbb{Z}_*}\frac{1}{|m|^{1+\lambda'p/2(p-1)}}\Big]^{p-1}.
\end{aligned}
\end{equation*}
The second sum over $m$ is bounded by a constant $C_{p,\lambda}$ while  for the first sum  we  bring in the sets $A_k$. Thus we obtain
\begin{equation*}
\begin{aligned}
\leq C_{p,\lambda}\sum_{k\in \mathbb{N}} |f(k)|^p \sum_{(m,n)\in A_k} \frac{1}{|m|^{\lambda'p/2 }}.
\end{aligned}
\end{equation*}
 By  \eqref{rev11} the inner sum   is bounded by a constant $C_{p,\lambda, \Delta}$.
Therefore we conclude that
\begin{equation*}
\begin{aligned}
\|\mathcal{I}_{\lambda}f\|_{p}\leq C_{p,\lambda, \Delta}\|f\|_p.
\end{aligned}
\end{equation*}

We  turn to the sharpness part of the theorem. When $p=\infty$, it is clear that we cannot take $\lambda=1$, we can observe this just by taking $f$ to be a nonzero constant function and $q$ any positive definite form. 

For  $p=1$, we  first consider the case $r=1$. We will describe how to generalize this to the case of arbitrary $r$ afterwards.  We let $q(m,n)=m^2+n^2$. A well known theorem of Jacobi tells us that if $k$ is an odd   natural number for  which all prime factors  are of the form $4l+1$, then the number of representations of $k$ as a sum of two squares is $4d(k)$, see \cite{ld}. Therefore the set $A_k$ contains at least $4d(k)-2\geq 2d(k)$ elements. Thus if we consider the numbers $k_j:=(5\cdot 13)^j, \ j\in \mathbb{N}$, then $\# A_{k_j} \geq 2(j+1)^2$. But $j=\log k_j /\log 65 $, therefore 
  \[\# A_{k_j} \geq  \frac{2}{\log^2 65}\log^2k_j.\]  
 We define
 \[f(k):=\begin{cases}{j^{-2}} & \text{if } k=k_j  \\ 0 & \text{otherwise.}\end{cases}\]
 This $f$ clearly is in $l^1(\mathbb{Z})$. But we have
 
    \begin{equation*}
    \begin{aligned}
    \|\mathcal{I}_{log}f\|_1=\sum_{n\in \mathbb{Z}}\sum_{m\in \mathbb{Z}_*} \frac{f(q(m,n))}{\log(1+|m|)}&=\sum_{k\in \mathbb{N}}f(k)\sum_{(m,n)\in A_k} \frac{1}{\log(1+|m|)}\\ &= \sum_{j\in \mathbb{N}}f(k_j)\sum_{(m,n)\in A_{k_j}} \frac{1}{\log(1+|m|)}\\ &\geq \sum_{j\in \mathbb{N}}j^{-2} \frac{2}{\log^2 65}\log^2k_j \frac{1}{\log k_j} 
    \\ & =  \frac{2}{\log  65}\sum_{j\in \mathbb{N}}j^{-1}, 
    \end{aligned}
    \end{equation*}
which clearly is divergent. 
This process generalizes in a straightforward way  to the case of arbitrary $r$   by  taking a larger number of primes of the form $4l+1$  instead of just $5,13$. 

For the case $1<p<\infty$  we take $q(m,n):=m^2+n^2$, and 
\begin{equation}\label{p1}
f(k):=\begin{cases}{j^{-\frac{1}{p}} \log^{-\frac{1+p}{2p}}j } & \text{if } k=j^2, \ \ j\in \mathbb{N}-\{1\}  \\ 0 & \text{otherwise.}\end{cases}
\end{equation}
We then have 
 \begin{equation}\label{p2}
    \begin{aligned}
    \|\mathcal{I}_{1-p^{-1}}f\|_p^p=\sum_{n\in \mathbb{Z}}\Big|\sum_{m\in \mathbb{Z}_*} \frac{f(q(m,n))}{|m|^{1-p^{-1}}}\Big|^p
    &\geq \Big|\sum_{m\in \mathbb{Z}_*} \frac{f(q(m,0))}{|m|^{1-p^{-1}}}\Big|^p\\     
     &= \Big(2\sum_{m\geq 2} \frac{m^{-p^{-1}}\log^{-\frac{1+p}{2p}}m}{m^{1-p^{-1}}}\Big)^p\\  &= \Big(2\sum_{m\geq 2} \frac{1}{m\log^{\frac{1+p}{2p}}m}\Big)^p,          
    \end{aligned}
    \end{equation}
and this clearly diverges.

\end{proof}

 This proof suggests that the proper space for the study of $\mathcal{I}_{\lambda}$ is $l^1(\mathbb{Z})$, for the case $l^{\infty}(\mathbb{Z})$ is trivial, and the case $l^{p}(\mathbb{Z}),\  1<p<\infty$ follows largely  from the $l^1(\mathbb{Z})$ case and summability arguments.  The last counterexample makes it plain that  when the operators $\mathcal{I}_{\lambda}$ are applied to $l^{p}(\mathbb{Z})$ functions the sum in \eqref{fig} may not even  be finite unless $\lambda >1-p^{-1}$, and we do not even need to make use of    more delicate properties of the form $q$ to see this. 


\section{The case of indefinite forms}

We prove Theorem 2 in this section. The proof will rely  but also greatly expand upon ideas laid forth in the proof of Theorem 1. Most importantly instead of using the estimate \eqref{fin1} on the number of representations we will use their sparsity. We will establish this sparsity from \eqref{autin2} for the special case when the form $q$ is primitive and diagonal, and we will reduce the other cases to this case.

\begin{proof}
When $p=\infty $ the proof is exactly the same as it is for the corresponding case   of Theorem 1, and we obtain 

\[ \|\mathcal{I}_{\lambda}f\|_{\infty} \leq C_{\lambda}\|f\|_{\infty}.\]

Let $p=1$. We first  assume $q(m,n)=am^2+cn^2$ with $a>0,c<0$ and $\gcd(a,c)=1$. Since the determinant $-4ac$ must be  nonsquare it is at least 8.  We again consider the sets $A_k, k \in \mathbb{Z}$. If $k=0$, then it follows from \eqref{lines} that $A_k$ is empty, therefore we assume  $k \neq  0$. In this case we define 
\begin{equation*}
 A_{k}':=\{(m,n)\in \mathbb{Z}_*\times \mathbb{Z}_*:q(m,n)=k \},
\end{equation*}
and since $A_k\setminus A_k'$ can contain at most 2 elements, we have 
 \begin{equation*}
 \sum_{(m,n)\in A_k} \frac{1}{|m|^{\lambda }}\leq 2+  \sum_{(m,n)\in A_k'} \frac{1}{|m|^{\lambda }}. 
 \end{equation*}
If  $(m,n)\in A_k',$ then  $(-m,-n),(m,-n),(-m,n)\in A_k'$. Therefore if we define
\begin{equation*}
 A_{k}'':=\{(m,n)\in \mathbb{N}\times \mathbb{N}:q(m,n)=k \},
\end{equation*}
 we have
  \begin{equation*}
  \sum_{(m,n)\in A_k} \frac{1}{|m|^{\lambda }}\leq 2+  \sum_{(m,n)\in A_k'} \frac{1}{|m|^{\lambda }}=2+4\cdot\sum_{(m,n)\in A_k''} \frac{1}{|m|^{\lambda }}. 
  \end{equation*}
 Henceforth we  concentrate our efforts on estimating the sum over $A_k''$.  We decompose
\begin{equation*}
A_k''=\bigcup_{ g^2|k}A_{k,g}'', \ \ \ \  \ \ A_{k,g}'':=\{(m,n)\in \mathbb{N}\times \mathbb{N}:q(m,n)=k,\ \gcd(m,n)=g \}.
\end{equation*}
Then, as laid out in section 4, the map $(m,n)\mapsto (m/g,n/g)$ gives a bijection from $A_{k,g}''$ onto $A_{k/g^2,1}''$, and each  representation in $A_{k/g^2,1}''$  emerges from a solution of  
\begin{equation}\label{cong21}
u_g^2\equiv \Delta \ \  (\text{mod}\ 4|k/g^2|), \quad \text{and}  \quad 0\leq u_g< 2|k/g^2|,
\end{equation}
 and the corresponding form $[k/g^2,u_g,v_g]$. Let therefore 
\begin{equation*}
A_{k/g^2,1}''=\bigcup_{u_g}A_{k/g^2,1,u_g}''.
\end{equation*}
  These decompositions  induce decompositions of $A_{k,g}''$ into  subsets $ A_{k,g,u_{g}}''$. We thus write
 \begin{equation*}\label{}
 \sum_{(m,n)\in A_k''} \frac{1}{|m|^{\lambda }}=  \sum_{g^2|k} \    \sum_{u_{g}} \ \   \sum_{(m,n)\in A_{k,g,u_{g}}''} \frac{1}{|m|^{\lambda }}. 
 \end{equation*}
We have
\begin{equation*}\label{}
   \sum_{(m,n)\in A_{k,g,u_{g}}''} \frac{1}{|m|^{\lambda }} =\frac{1}{g^{\lambda}}\sum_{(m,n)\in A_{k/g^2,1,u_{g}}''} \frac{1}{|m|^{\lambda }}. 
 \end{equation*}
  The elements of $A_{k/g^2,1,u_{g}}''$, if it is nonempty,  are obtained from  multiplying a matrix  that takes $q$ to $[k/g^2,u_{g},v_{g}]$ by the automorphs of $q$. Let $[U_{g}]$  
 be a matrix taking $q$ to $[k/g^2,u_{g},v_{g}]$, and let $[A]$ be the automorph of $q$ corresponding to the least positive solution $(T,U)$ of the equation $t^2-\Delta u^2=4$, thus
 \begin{equation}\label{matr15}
[U_g]:= \begin{bmatrix}
  	    \alpha_{u_g}   & \beta_{u_g} \\
  	\gamma_{u_g} & \delta_{u_g}  
  \end{bmatrix}, \ \ \ \ [A]:=\begin{bmatrix}
   	    T/2   & -cU \\
   	aU & T/2  
   \end{bmatrix}.
  \end{equation}
 We  have $T^2=\Delta U^2+4\geq \Delta+4\geq 12$, and therefore $T\geq 2\sqrt{3}$.  The elements $(m,n)\in A_{k/g^2,1,u_{g}}''$ are the first columns of  the  matrices in the chains below for which both entries of the first column are  positive      
\begin{equation}\label{cha}
\begin{aligned}
\ldots  [A]^{-2}[U_{g}], \ \ [A]^{-1}[U_{g}], \ \ &[A]^0[U_{g}], \ \ [A][U_{g}], \ \ [A]^2[U_{g}]  \ldots \\
\ldots   -[A]^{-2}[U_{g}],  \ -[A]^{-1}[U_{g}],  \ -&[A]^0[U_{g}],  \ -[A][U_{g}],  \ -[A]^2[U_{g}]  \ldots
\end{aligned}
\end{equation}
  We let 
\begin{equation*}
\begin{bmatrix}
 	    \alpha   & \beta \\
 	\gamma & \delta  
 \end{bmatrix}
 \end{equation*}
stand for an arbitrary matrix in these chains. Since we have  
\begin{equation*}
\begin{bmatrix}
 	    T/2   & -cU \\
 	aU & T/2  
 \end{bmatrix} \begin{bmatrix}
 	    \alpha   & \beta \\
 	\gamma & \delta  
 \end{bmatrix}=
 \begin{bmatrix}
 	    T\alpha/2 -cU\gamma   & T\beta/2-cU\delta \\
 	aU\alpha+T\gamma/2 &  aU\beta+T\delta/2  
 \end{bmatrix},
 \end{equation*}
if $\alpha,\gamma> 0$, then   $T\alpha/2 -cU\gamma,  aU\alpha+T\gamma/2> 0$, and further 
\begin{equation}\label{sqrt3}
T\alpha/2 -cU\gamma>   \sqrt{3}\alpha.      
\end{equation}
Therefore if there is a matrix in one of the chains \eqref{cha}  for which 
\begin{equation}\label{pp}
\alpha>0, \ \ \ \ \ \gamma> 0,
\end{equation}
 then for every matrix to the right of it the same property holds. This immediately implies that if there are matrices with this property in  these two  chains, all of them must be located in just one chain. On this chain owing to \eqref{sqrt3} there must be a leftmost matrix with this property. Let  
 \begin{equation*}
[U_g']:= \begin{bmatrix}
  	    \alpha_{u'_g}   & \beta_{u'_g} \\
  	\gamma_{u'_g} & \delta_{u'_g}  
  \end{bmatrix}
  \end{equation*}
 denote this matrix. Thus all elements of $A_{k/g^2,1,u_{g}}''$ come from the first columns of $[A]^j[U_g']$ for $j\geq 0$, and the first entry of  the first column for $[A]^j[U_g']$  is not less than $3^{j/2}\alpha_{u'_g}$. 
 Therefore 
 \begin{equation*}\label{}
 \sum_{(m,n)\in A_{k/g^2,1,u_{g}}''} \frac{1}{|m|^{\lambda }}\leq \frac{1}{\alpha_{u'_g}^{\lambda}}\sum_{j= 0}^{\infty} \frac{1}{3^{j\lambda/2}}\leq \frac{C_{\lambda}}{\alpha_{u'_g}^{\lambda}}, 
  \end{equation*}
 and thus
 \begin{equation*}\label{}
    \sum_{(m,n)\in A_{k,g,u_{g}}''} \frac{1}{|m|^{\lambda }} \leq\frac{C_{\lambda}}{(g\alpha_{u'_g})^{\lambda}}, 
  \end{equation*}
with $(g\alpha_{u'_g},g\gamma_{u'_g})\in A_{k,g,u_{g}}''$ being just one representation. Thus we succeeded in estimating an infinite sum over representations of a certain type by just one representation of that type.  Then 
\begin{equation*}\label{}
 \sum_{(m,n)\in A_k''} \frac{1}{|m|^{\lambda }}=  \sum_{g^2|k} \    \sum_{u_{g}} \ \   \sum_{(m,n)\in A_{k,g,u_{g}}''} \frac{1}{|m|^{\lambda }} \leq  C_{\lambda} \sum_{g^2|k} \    \sum_{u_{g}}\frac{1}{(g\alpha_{u'_g})^{\lambda}}. 
 \end{equation*}
There  are at most 4 representations $q(m,n)=k$ with $|m|\leq |k|^{1/4}\Delta^{-1/2}$ by Lemma 2, therefore we have 
\begin{equation}\label{rev12}
 \leq C_{\lambda}\Big[4+   \frac{{\Delta}^{\lambda/2}}{|k|^{\lambda/4}}\sum_{g^2|k} \    \sum_{u_{g}} 1  \Big]. 
 \end{equation}
The double sum gives the sum over $g$ of number of solutions of 
\eqref{cong21}, and as explained in section 4, for each $g$ this number is bounded by
 $d(|k|)\sqrt{\Delta}.$
 The number of possible $g$ is again bounded by $d(|k|)$. Therefore if we use the estimate $d(|k|)\leq C_{\lambda}|k|^{\lambda/16}$, the double sum is bounded by $C_{\lambda}|k|^{\lambda/8}\sqrt{\Delta}.$ Hence
  \begin{equation*}
   \leq C_{\lambda}\Big[4+   \frac{{\Delta}^{\lambda/2}}{|k|^{\lambda/4}} C_{\lambda}|k|^{\lambda/8}\sqrt{\Delta}\Big]\leq C_{\lambda}\big[4+    C_{\lambda}\Delta^{(1+\lambda)/2}\big] \leq    C_{\lambda}\Delta^{(1+\lambda)/2}. 
   \end{equation*}
Therefore
\begin{equation}\label{mn7}
 \sum_{(m,n)\in A_k} \frac{1}{|m|^{\lambda }}\leq C_{\lambda}{\Delta}^{(1+\lambda)/2}, 
 \end{equation}
and this suffices to conclude that  for $q(m,n)=am^2+cn^2$ primitive with $a>0,c<0$ we have
\begin{equation*}\label{}
\|\mathcal{I}_{\lambda}f\|_{1} \leq C_{\lambda}{\Delta}^{(1+\lambda)/2}\|f\|_1. 
 \end{equation*}

We  move to the general case. Let $q(m,n)=am^2+bmn+cn^2$ be an indefinite form of nonsquare discriminant. Discriminant being nonsquare implies $a\neq 0,c\neq 0$.  Then, 
\begin{equation*}\label{}
\|I_{\lambda,q}f\|_{1} \leq \sum_{k\in \mathbb{Z}} |f(k)|  \sum_{(m,n)\in A_k} \frac{1}{|m|^{\lambda }} , 
 \end{equation*}
with $A_k:=\{(m,n)\in \mathbb{Z}_*\times \mathbb{Z}:am^2+bmn+cn^2=k \}$. But we have 
\begin{equation*}
\begin{aligned}
&\{(m,n)\in \mathbb{Z}_*\times \mathbb{Z}:am^2+bmn+cn^2=k \}\\
=&\{(m,n)\in \mathbb{Z}_*\times \mathbb{Z}:4acm^2+4bcmn+4c^2n^2=4ck \}\\
=&\{(m,n)\in \mathbb{Z}_*\times \mathbb{Z}:(bm+2cn)^2-\Delta(q)m^2=4ck \}
\\
=&\{(m,n)\in \mathbb{Z}_*\times \mathbb{Z}:\Delta(q)m^2 -(bm+2cn)^2=-4ck .\}
\end{aligned}
\end{equation*}
We define the form $q'(x,y)=\Delta(q)x^2-y^2$, and the sets $A_{q',k}:=\{(m,n)\in \mathbb{Z}_*\times \mathbb{Z}:\Delta(q)m^2-n^2=k \}$. Then we have $\Delta(q')=4\Delta(q)$. If $(m,n)\in A_k$, then $(m,bm+2cn)\in A_{q',-4ck}$, and the map $(m,n)\mapsto (m,bm+2cn)$
is injective. Therefore
\begin{equation}\label{mn8}
 \begin{aligned}
\sum_{(m,n)\in A_k} \frac{1}{|m|^{\lambda }}&\leq \sum_{(m,n)\in A_{q',-4ck}} \frac{1}{|m|^{\lambda }}.
\end{aligned}
 \end{equation}
Since the form $q'$ is of a type covered by our investigation above, from \eqref{mn7} with $\Delta(q')=4\Delta(q)$ we  have
\begin{equation}\label{mn9}
 \sum_{(m,n)\in A_{q',-4ck}} \frac{1}{|m|^{\lambda }}\leq C_{\lambda}2^{1+\lambda}{\Delta(q)}^{(1+\lambda)/2}=C_{\lambda}{\Delta(q)}^{(1+\lambda)/2}, 
 \end{equation}
and thus
\begin{equation*}\label{}
\|I_{\lambda,q}f\|_{1} \leq C_{\lambda}{\Delta(q)}^{(1+\lambda)/2} \|f\|_1= C_{\lambda,\Delta(q)} \|f\|_1.
 \end{equation*}

We  let $1<p<\infty$, and $q(m,n)=am^2+bmn+cn^2$ an indefinite form of nonsquare discriminant. Proceeding exactly as in the positive definite case yields

\begin{equation*}
\begin{aligned}
\|\mathcal{I}_{\lambda}f\|_{p}^p\leq  C_{p,\lambda}\sum_{k\in \mathbb{N}} |f(k)|^p \sum_{(m,n)\in A_k} \frac{1}{|m|^{\lambda'p/2 }}
\end{aligned}
\end{equation*}
with $\lambda'=\lambda-1+p^{-1}.$ Combining \eqref{mn8} and \eqref{mn9} yields

\begin{equation*}
\begin{aligned}
 \sum_{(m,n)\in A_k} \frac{1}{|m|^{\lambda'p/2 }} \leq C_{p,\lambda}\Delta^{(1+\lambda'p/2)/2},
\end{aligned}
\end{equation*}
and therefore
\begin{equation*}
\begin{aligned}
\|\mathcal{I}_{\lambda}f\|_{p}\leq  C_{p,\lambda}\Delta^{1/2p+\lambda'/4}\|f\|_p=C_{p,\lambda,\Delta}\|f\|_p .
\end{aligned}
\end{equation*}

We move to the sharpness part of the theorem.  We observe that the claim is clear for
 $p=\infty$.
Let $p=1$. If  $r=1$, the unboundedness  is easy to show using the infinitude of automorphs.  
 Indeed, let $q(m,n):=m^2-8n^2$ and 
\[f(k)=\begin{cases}{1} & \text{if } k=4  \\ 0 & \text{otherwise.}\end{cases}\]
We have representations of $4$ given by \eqref{autin1}, which in our case becomes
 \begin{equation*}
 (t+2\sqrt{2}u)=i 2(3+2\sqrt{2})^j, \quad  i=\pm1, \quad j=0,\pm 1,\pm 2 \ldots. 
 \end{equation*}
It suffices to consider the representations $t_j,u_j>0$ obtained when $i=1$ and $j>0.$ 
These satisfy $t_j\leq 2\cdot 6^j.$ Hence
 \begin{equation*}
    \begin{aligned}
    \|\mathcal{I}_{log}f\|_1=\sum_{k\in \mathbb{N}}f(k)\sum_{(m,n)\in A_k} \frac{1}{\log(1+|m|)} &= \sum_{(m,n)\in A_{4}} \frac{1}{\log(1+|m|)}\\ &\geq\sum_{j\in \mathbb{N}} \frac{1}{\log(1+t_j)} \\ &\geq \sum_{j\in \mathbb{N}} \frac{1}{\log 6^{j+1}}, 
    \end{aligned}
    \end{equation*}
and this diverges. But this method clearly does not generalize to the cases 
 $r\geq 2$.  For these cases we must use the arithmetic structure of the quadratic forms, just as we did in section 4. But we will also need further effort  to deal with indefiniteness of the form. More specifically, in section 4 the form we used $q(m,n)=m^2+n^2$ allows us to conclude that if $q(m,n)=k$,  then $|m|\leq |k|$. No such conclusion is possible for indefinite forms, and indeed we know that $|m|\geq C|k|$ is possible for any $C\in \mathbb{N}$. However    we will be able to find for  each solution  of \eqref{cong21} for which there are corresponding representations one  representation $(m,n)$  of $k$ with $|m|\leq 10|k|$, and this will suffice. Thus we will not make use of the infinitude of representations, but we must calculate exactly the number of solutions of \eqref{cong21}, and make sure that each solution gives rise to representations of $k$. 

  We take the form $q(m,n):=m^2-2n^2$. Therefore  $\Delta(q)=8$. We take  the primes $7,17$, which are of the  form $8l\pm 1$.  Then we consider $k_j=(7\cdot17)^{2j+1},\  j\in \mathbb{N}.$ We define
\[f(k)=\begin{cases}{j^{-2}} & \text{if } k=k_j  \\ 0 & \text{otherwise.}\end{cases}\]
We compute the number of solutions of \eqref{cong21}.
If  $g$ is a square divisor of $k_j$, it must have the form $7^{i_1}17^{i_2}, \ 0\leq i_1,i_2 \leq j$, and thus the number of solutions of \eqref{cong21}  is 
\[\Gamma_8(7^{2(j-i_1)+1} 17^{2(j-i_2)+1})=\Gamma_8(7^{2(j-i_1)+1})\Gamma_8( 17^{2(j-i_2)+1}).\]
Both factors on the right hand side are 2 by Theorem 17 of \cite{ld}. Therefore  \eqref{cong21} has $4$ solutions for each choice of $i_1,i_2$. Since there is a total of $j+1$ choices for each of $i_1,i_2$, the equations \eqref{cong21} have in total $4(j+1)^2\geq \log^2k_j/\log^2 119 $ solutions. For each solution $[k_j/g^2,u_g,v_g]$ we want to find a matrix $[U_g]$ 
    of determinant 1 that maps $q$ to this solution, as given by \eqref{matr15}. Existence of such a matrix is guaranteed, for both $q$ and $[k_j/g^2,u_g,v_g]$ have discriminant 8, and there is only one equivalence class of forms for this discriminant; see page 99-104 of \cite{ld}. Since $\alpha_{u_g}^2-2\gamma_{u_g}^2=k_j/g^2>0$, we have $\alpha_{u_g}\neq 0$. Since the matrix $-[U_g]$ also have the desired properties of $[U_g]$, we may assume $\alpha_{u_g}>0.$  All representations corresponding to $u_g$ are  then as in \eqref{cha}, and   
 the automorph $[A]$ and its inverse is given in our case by
\[[A]:=\begin{bmatrix}
  	    3   &  4\\ 
  	2 &  3  
  \end{bmatrix},  \quad \quad \quad  [A]^{-1}= \begin{bmatrix}
    	    3   &  -4\\ 
    	-2 &  3  
    \end{bmatrix}.  \]
As mentioned above, we want to associate just one representation to $u_g$. If $\alpha_{u_g}\leq 10k_j/g^2$, then we let $(g\alpha_{u_g},g\gamma_{u_g})$ be this representation. We now suppose   $\alpha_{u_g}> 10k_j/g^2$. Therefore $\gamma_{u_g}^2>49k_j/g^2$, and $\gamma_{u_g}$ is either positive or negative. If it is positive, then  from the matrix $[A]^{-1}[U_g]$ we obtain the representation $(3\alpha_{u_g}-4\gamma_{u_g},-2\alpha_{u_g}+3\gamma_{u_g})$, for which we have 
$0<3\alpha_{u_g}-4\gamma_{u_g}<\alpha_{u_g} /3$, and $0<-2\alpha_{u_g}+3\gamma_{u_g}$. If $3\alpha_{u_g}-4\gamma_{u_g}\leq 10k_j/g^2$ we multiply this representation  with $g$ and  associate it to $u_g$, if not, we repeat the process. Clearly in a finite number of steps we obtain a desired representation. Similarly, if $\gamma_{u_g}$ is negative,  from the matrix $[A][U_g]$ we obtain the representation $(3\alpha_{u_g}+4\gamma_{u_g},2\alpha_{u_g}+3\gamma_{u_g})$, for which we have 
$0<3\alpha_{u_g}+4\gamma_{u_g}<\alpha_{u_g} /3$, and $2\alpha_{u_g}+3\gamma_{u_g}<0$. If $3\alpha_{u_g}+4\gamma_{u_g}\leq 10k_j/g^2$ we  multiply this representation by  $g$ and associate it to $u_g$, if not, we repeat the process until, in a finite number of steps, we obtain a representation with desired properties.

Therefore for any solution $u_g$ we have a representation $(g\alpha_{u_g},g\gamma_{u_g})$ where $0<\alpha_{u_g}\leq 10k_j/g^2$. Having obtained these representations we can write
 \begin{equation*}
    \begin{aligned}
    \|\mathcal{I}_{log}f\|_1=\sum_{n\in \mathbb{Z}}\sum_{m\in \mathbb{Z}_*} \frac{f(q(m,n))}{\log(1+|m|)}&=\sum_{k\in \mathbb{N}}f(k)\sum_{(m,n)\in A_k} \frac{1}{\log(1+|m|)}\\ &= \sum_{j\in \mathbb{N}}f(k_j)\sum_{(m,n)\in A_{k_j}} \frac{1}{\log(1+|m|)}\\ &\geq \sum_{j\in \mathbb{N}}j^{-2} \frac{\log^2k_j}{\log^2 119} \frac{1}{2\log k_j} 
    \\ & \geq  \frac{1}{\log 119}\sum_{j\in \mathbb{N}}j^{-1}, 
    \end{aligned}
    \end{equation*}
and this is clearly divergent.  This example can easily be generalized to  any $r\in  \mathbb{N}$. The number $8$ is a quadratic residue for   any prime $p$ of the form $8l\pm1$, see chapter 3 of \cite{ld}, and this allows us to conclude that $u_g^2\equiv 8 \ (\text{mod}\  p)$ have 2 solutions. Therefore if we take more primes of this form instead of just $7,17$ the same method allows us to show unboundedness  for any $r\in  \mathbb{N}$.

 When $1<p<\infty$ and $\lambda=1-p^{-1}$, we take $q(m,n):=m^2-2n^2$, and $f$ as in \eqref{p1}. Then the steps in \eqref{p2} yield the desired result.

\end{proof}


\begin{thebibliography}{99}

\bibitem{ao}
G. I. Arkhipov and K. I. Oskolkov, \emph{On a special trigonometric series and its applications},
Math. USSR-Sb {\bf 62} (1987), 145–155.

\bibitem{jb1}
J. Bourgain, \emph{ On the maximal ergodic theorem for certain subsets of the integers}, Israel J.
Math. {\bf 61} (1988) 39–72.

\bibitem{jb2}
J. Bourgain, \emph{Pointwise ergodic theorems for arithmetic sets, with an appendix by the author},
Vol. 69, 5–45, Inst. Hautes Etudes
Sci. Publ. Math. (1989).

\bibitem{db}
D.A. Buell, \emph{Binary quadratic forms: classical theory and modern computations}, Springer-Verlag, New York, (1989) 

\bibitem{ag1}
E.S. Crawley, and H.B. Evans, \emph{Analytic geometry}, The New Era Printing Company, Lancaster, (1918)


\bibitem{ld}
L.E. Dickson, \emph{Introduction to the theory of numbers}, Dover Publications, New York, (1929)

\bibitem{hw}
G.H. Hardy, and E.M. Wright, \emph{An introduction to the theory of numbers}, Fourth edition, Oxford University Press, (1975)


\bibitem{iw}
A. D. Ionescu and S. Wainger,\emph{ $L^p$ boundedness of discrete singular Radon transforms}, J.
Amer. Math. Soc. {\bf 19} (2005), no. 2, 357–383.


\bibitem{imsw}
A. D. Ionescu, A. Magyar, E. M. Stein, and S. Wainger, \emph{Discrete Radon transforms and
applications to ergodic theory}, Acta Math. {\bf 198} (2007), 231–298.


\bibitem{akm}
A. Magyar, \emph{Diophantine equations and ergodic theorems}, Amer. J. Math. {\bf 124} (2002), 921–953.

\bibitem{mst1}
M. Mirek, E. Stein, B. Trojan, \emph{Estimates for discrete operators of Radon type: variational estimates},
To appear in the Inventiones Mathematicae.

\bibitem{mst2}
M. Mirek, E. Stein, B. Trojan, \emph{Estimates for discrete operators of Radon type: maximal functions and vector-valued estimates}, To appear in the Transactions of the American Mathematical Society.




\bibitem{ob}
D. M. Oberlin, \emph{Two discrete fractional integrals}, Math. Res. Lett. {\bf 8} (2001), 1–6.

\bibitem{gp1}
G. Pall, \emph{The structure of the number of representations function in a positive binary quadratic form}, Mathematische Zeitschrift, {\bf 36} (1933), no. 1, 321–343.

\bibitem{gp2}
G. Pall, \emph{The structure of the number of representations function in a  binary quadratic form}, Transactions of the American Mathematical Society, {\bf 35} (1933), no. 2, 491–509.


\bibitem{lbp}
L.B.Pierce, \emph{Discrete analogues in harmonic analysis}, Ph.D thesis Princeton University, (2009)


\bibitem{lbp1}
L.B.Pierce, \emph{Discrete fractional Radon transforms and quadratic forms}, Duke Math. J. {\bf 161} (2012), no. 1, 69–106.

\bibitem{lbp2}
 L.B.Pierce,  \emph{A note on twisted discrete singular Radon transforms}, Math. Res. Lett. {\bf 17} (2010), no. 4, 701–720. 

\bibitem{lbp3}
L.B.Pierce,  \emph{A note on discrete fractional integral operators on the Heisenberg group}, Int. Math. Res. Not. IMRN (2012), no. 1, 17–33. 

\bibitem{ag2}
M. Postnikov, \emph{Lectures in geometry: semester I analytic geometry}, Mir Publishers, Moscow, (1982) 

\bibitem{ag3}
R.A. Sharipov, \emph{Course of analytic geometry}, Ministry of Education and Science of the Russian Federation, Bashkir State University, Ufa, (2011)

\bibitem{sw1}
E. M. Stein and S. Wainger, \emph{Discrete analogues in harmonic analysis I: $l^2$ estimates for singular Radon transforms},
	Amer. J. Math. {\bf 121} (1999), 1291–1336.


\bibitem{sw}
E. M. Stein and S. Wainger, \emph{Discrete analogues in harmonic analysis II: Fractional integration}, J. d’Analyse
Math. {\bf 80} (2000), 335–355


\bibitem{sw2}
E. M. Stein and S. Wainger, \emph{Two discrete fractional integral operators revisited}, J. d’Analyse Math. {\bf 87} (2002),
451–479.















\end{thebibliography}
\end{document}